\documentclass[10pt]{article}

\input xy

\xyoption{all}

\usepackage{amssymb,amsbsy,amsmath,amsthm,graphicx,epsfig,times}




\newtheorem{thm}{Theorem}[section]
\newtheorem{lem}[thm]{Lemma}
\newtheorem{prop}[thm]{Proposition}
\newtheorem{cor}[thm]{Corollary}

\newtheorem{dfn}[thm]{Definition}

\theoremstyle{remark}

\newtheorem*{rmk}{Remark}
\newtheorem*{ex}{Example}
\newtheorem*{exs}{Examples}



\newcommand{\bs}[1]{\boldsymbol{#1}}

\renewcommand{\rm}[1]{\mathrm{#1}}


\newcommand{\bbN}{\mathbb{N}}

\newcommand{\bbR}{\mathbb{R}}



\newcommand{\sfE}{\mathsf{E}}
\newcommand{\sfP}{\mathsf{P}}



\renewcommand{\d}{\mathrm{d}}


\renewcommand{\P}{\mathcal{P}}


\newcommand{\frH}{\mathfrak{H}}



\renewcommand{\S}{\Sigma}


\renewcommand{\a}{\alpha}
\renewcommand{\b}{\beta}
\newcommand{\eps}{\varepsilon}
\newcommand{\g}{\gamma}

\renewcommand{\l}{\lambda}

\newcommand{\s}{\sigma}


\renewcommand{\Pr}{\mathrm{Pr}}


\renewcommand{\hat}[1]{\widehat{#1}}
\newcommand{\ol}[1]{\overline{#1}}

\newcommand{\fin}{\nolinebreak\hspace{\stretch{1}}$\lhd$}

\newcommand{\actson}{\curvearrowright}
\renewcommand{\to}{\longrightarrow}
\newcommand{\Samp}{\rm{Samp}}

\begin{document}

\title{Exchangeable random measures}

\author{Tim Austin\footnote{Research partially supported by a research fellowship from the Clay Mathematics Institute}\\ \\ \small{Courant Institute, New York University}\\ \small{New York, NY 10012, USA}}

\date{}

\maketitle

\begin{abstract}
Let $A$ be a standard Borel space, and consider the space $A^{\bbN^{(k)}}$ of $A$-valued arrays indexed by all size-$k$ subsets of $\bbN$.  This paper concerns random measures on such a space whose laws are invariant under the natural action of permutations of $\bbN$.  The main result is a representation theorem for such `exchangeable' random measures, obtained using the classical representation theorems for exchangeable arrays due to de Finetti, Hoover, Aldous and Kallenberg.

After proving this representation, two applications of exchangeable random measures are given.  The first is a short new proof of the Dovbysh-Sudakov Representation Theorem for exchangeable positive semi-definite matrices.  The second is in the formulation of a natural class of limit objects for dilute mean-field spin glass models, retaining more information than just the limiting Gram-de Finetti matrix used in the study of the Sherrington-Kirkpatrick model.

\vspace{7pt}

\begin{center}
\textbf{R\'{e}sum\'{e}}
\end{center}

\vspace{7pt}

Soit $A$ un espace de Borel standard, et soit
$A^{\mathbb{N}^{(k)}}$ l'ensemble des tableaux \`a valeur dans $A$ index\'es par
les sous-ensembles de $\mathbb{N}$ de taille $k$. On s'int\'eresse aux mesures
al\'eatoires sur un tel espace dont la loi est invariante par l'action naturelle
des permutations de $\mathbb{N}$. Le r\'esultat principal est une
repr\'esentation de ces mesures al\'eatoires ``\'echangeables'', obtenue \`a
partir des th\'eor\`emes de repr\'esentations classiques de de Finetti, Hoover, Aldous et
Kallenberg pour des tableaux \'echangeables.

Apr\`es avoir prouv\'e cette repr\'esentation, on en donne deux applications. La
premi\`ere est une nouvelle courte preuve du th\'eor\`eme de repr\'esentation de
Dovbysh-Sudakov pour des matrices d\'efinie semi-positive \'echangeables. La seconde concerne la
formulation d'une classe naturelle d'objets limites pour des mod\`eles de champs
moyens dilu\'es pour des verres de spins qui capture plus d'information que la
seule matrice limite de Gram-de Finetti qui est notamment utilis\'ee dans
l'\'etude du mod\`ele de Sherrington-Kirkpatrick.
\end{abstract}

\section{Introduction}

The theory of exchangeable arrays of random variables emerged in work of Hoover~\cite{Hoo79,Hoo82}, Aldous~\cite{Ald81,Ald82,Ald85} and Kallenberg~\cite{Kal89,Kal92}, and amounts to a significant generalization of the classical de Finetti-Hewitt-Savage Theorem on exchangeable sequences.  The heart of the theory is a collection of representation theorems for general such arrays, which then beget more specialized representation results such as the Dovbysh-Sudakov Theorem for exchangeable PSD matrices.

This note will consider the related setting of random measures on spaces of arrays, where now the laws of those random measures are assumed invariant under the relevant group action.  Intuitively, this introduces an `extra layer of randomness'.  In order to introduce these formally, let $[n] := \{1,2,\ldots,n\}$ for $n \in \bbN$, let $S_\bbN = \bigcup_{n\geq 1}S_{[n]}$ be the group of all permutations of $\bbN$ which fix all but finitely many elements, and consider a measurable action $T:S_\bbN \actson E$ on a standard Borel space $E$.  In full, this is a measurable function
\[T:S_\bbN\times E\to E:(\s,x)\mapsto T^\s x\]
such that
\[T^{\rm{id}_\bbN} = \rm{id}_E \quad \hbox{and} \quad T^{\s_1}T^{\s_2}x = T^{\s_2\s_1}x \quad \forall \s_1,\s_2,x.\]
As is standard, if $\mu \in \Pr\,E$ and $\s \in S_\bbN$ then $T^\s_\ast\mu$ denotes the image measure of $\mu$ under $T^\s$.

\begin{dfn}
If $E$ is a standard Borel space and $T:S_\bbN\actson E$ is a measurable action, then an \textbf{exchangeable random measure} (`\textbf{ERM}') on $(E,T)$ is a random variable $\bs{\mu}$ taking values in $\Pr\,E$ such that
\[\bs{\mu} \stackrel{\rm{law}}{=} T^\s_\ast\bs{\mu} \quad \forall \s \in S_\bbN;\]
that is,
\[\bs{\mu}(A) \stackrel{\rm{law}}{=} \bs{\mu}\{x\,:\,T^\s x \in A\} \quad \forall \s \in S_\bbN,\ A \subseteq_\rm{Borel} E.\]
\end{dfn}

These are essentially what ergodic theorists call `quasi-factors'~\cite[Chapter 8]{Gla03}. We will study these for the group actions that underly the theory of exchangeable arrays.  Given a standard Borel space $A$ and $k \in \bbN$, the space of \textbf{$k$-dimensional arrays valued in $A$} is $A^{\bbN^{(k)}}$, where $\bbN^{(k)}$ denotes the set of size-$k$ subsets of $\bbN$.  An element of such a space of arrays will often be denoted by $(x_e)_{|e|=k}$ or similarly.  (In the following, one could focus instead on arrays indexed by ordered $k$-tuples, but we have chosen the symmetric case as it is a little simpler and arises more often in applications.)  The group $S_\bbN$ acts on $A^{\bbN^{(k)}}$ by permuting coordinates in the obvious manner:
\[T^\s((x_e)_{|e| = k}) = (x_{\s(e)})_{|e| = k},\]
where $\s(e) = \{\s(i)\,:\,i\in e\}$.  Slightly more generally, our main results will also allow Cartesian products of such actions over finitely many different $k$.  Thus, our arrays will usually be indexed by the family $\bbN^{(\leq k)}$ of subsets of $\bbN$ of size at most $k$ for some fixed $k$.

\begin{exs}
\emph{(1)}\quad If an exchangeable random measure $\bs{\mu}$ on $(E,T)$ is deterministic, then its constant value must itself be invariant under the action $T$.  In case $E = A^{\bbN^{(k)}}$ with the action above, this means $\bs{\mu}$ is almost surely the law of an exchangeable $A$-valued, $\bbN^{(k)}$-indexed array.

\emph{(2)}\quad On the other hand, if $\mu$ is a $T$-invariant measure for any action $(E,T)$, then another way to obtain an exchangeable random measure from it is to let
\[\bs{\mu} := \delta_X\]
where $X$ is a random element of $E$ with law $\mu$, and $\delta_X$ is the Dirac mass at $X$.

\emph{(3)}\quad In case $E = A^{\bbN^{(k)}}$ with the action above, example (2) fits into a more general family as follows.  The space of probability measures $\Pr\,A$ is also standard Borel with the Borel structure generated by evaluation of measures on Borel sets.  Suppose $(\bs{\l}_e)_{|e| = k}$ is an exchangeable array of $(\Pr\,A)$-valued random variables, and now let
\[\bs{\mu} = \bigotimes_{|e| = k}\bs{\l}_e.\]
This class of examples will feature again later.  Such an example is called an \textbf{exchangeable random product measure} (`\textbf{ERPM}').

\emph{(4)}\quad It is also easy to exhibit an ERM which is not ERPM.  For example, let $\bs{\Pi} = (A,B)$ be a uniform random bipartition of $\bbN$ (this is obviously exchangeable), and having chosen $\bs{\Pi}$ let $\bs{\mu} \in \Pr\{0,1\}^{\bbN^{(2)}}$ be the probability which has two atoms of mass $\frac{1}{2}$ on the points
\[1_{A^{\bbN^{(2)}}} \quad \hbox{and} \quad 1_{B^{\bbN^{(2)}}}.\]

\emph{(5)}\quad Lastly, given a measurable family of exchangeable random measures $\bs{\mu}_t$ indexed by a parameter $t \in [0,1)$, we may average over this parameter to obtain a mixture of these exchangeable random measures:
\[\bs{\mu} = \int_0^1 \bs{\mu}_t\,\d t.\]
This is clearly still exchangeable. \fin
\end{exs}

The main result of this paper characterizes all ERMs on spaces of arrays. To motivate it, we next recall the Representation Theorem for exchangeable arrays themselves.  This requires some more notation.

First, for any set $S$ we let $\P S$ denote the power set of $S$.

Next, suppose that $B_0$, $B_1$, \ldots, $B_k$ and $A$ are standard Borel spaces.  A Borel function
\[f:B_0\times B_1^k\times B_2^{[k]^{(2)}}\times \cdots\times B_k = \prod_{i\leq k}B_i^{[k]^{(i)}}\to A\]
is \textbf{middle-symmetric} if
\[f\big(x,(x_i)_{i \in [k]},(x_a)_{a \in [k]^{(2)}},\ldots,x_{[k]}\big) =
f\big(x,(x_{\s(i)})_{i\in [k]},(x_{\s(a)})_{a \in [k]^{(2)}},\ldots,x_{[k]}\big)\]
for all $\s\in S_{[k]}$.

Given standard Borel spaces $B_0$, $B_1$, \ldots, $B_k$ and $A_0$, $A_1$, \ldots, $A_k$, and middle-symmetric Borel functions
\[f_i:\prod_{j\leq i}B_j^{[i]^{(j)}}\to A_i, \quad i=0,1,\ldots,k,\]
we will write $\hat{f}$ for the function
\[\prod_{i\leq k}B_i^{[k]^{(i)}}\to \prod_{i\leq k} A_i^{[k]^{(i)}}:(x_e)_{e\subseteq [k]}\mapsto \big(f_{|e|}((x_a)_{a\subseteq e})\big)_{e \subseteq [k]},\]
which combines all of the $f_i$.

The tuple $(f_0,\ldots,f_k)$ is referred to as a \textbf{skew-product tuple}, and the associated function $\hat{f}$ as a function of \textbf{skew-product type}; clearly the latter determines the former uniquely.

\begin{ex}
If $k=2$, then a function of skew-product type $[0,1)^{\P[2]}\to [0,1)$ takes the form
\[\hat{f}(x,x_1,x_2,x_{12}) = (f_0(x),f_1(x,x_1),f_1(x,x_2),f_2(x,x_1,x_2,x_{12})).\]
\fin
\end{ex}

It is easily checked that if $\hat{f}$ and $\hat{g}$ are functions of skew-product type for the same $k$, then so is $\hat{g}\circ \hat{f}$.  In terms of $(f_0,\ldots,f_k)$ and $(g_0,\ldots,g_k)$ this composition corresponds to the skew-product tuple
\[h_i((x_a)_{a \subseteq [i]}) := g_i\big(\big(f_{|a|}((x_b)_{b\subseteq a})\big)_{a \subseteq [i]}\big), \quad i=0,1,\ldots,k.\]

Slightly abusively, we will also write $\hat{f}$ for the related function
\[\prod_{i\leq k}B_i^{\bbN^{(i)}}\to \prod_{i\leq k}A_i^{\bbN^{(i)}}:(x_e)_{|e| \leq k}\mapsto \big(f_{|e|}((x_a)_{a\subseteq e})\big)_{|e| \leq k},\]
which also determines $(f_0,\ldots,f_k)$ uniquely.

\begin{thm}[Representation Theorem for Exchangeable Arrays; Theorem 7.22 in~\cite{Kal05}]\label{thm:Kal1}
Suppose that $A_0$, $A_1$, \ldots, $A_k$ are standard Borel spaces and that $(X_e)_{|e| \leq k}$ is an exchangeable random array of r.v.s with each $X_e$ valued in $A_{|e|}$.  Then there are middle-symmetric Borel functions
\[f_i:[0,1)^{\P[i]}\to A_i, \quad i=0,1,\ldots,k,\]
such that
\[(X_e)_{|e|\leq k} \stackrel{\rm{law}}{=} \big(f_{|e|}((U_a)_{a \subseteq e})\big)_{|e|\leq k} \stackrel{\rm{dfn}}{=} \hat{f}((U_e)_{|e|\leq k}),\]
where $(U_e)_{|e| \leq k}$ is an i.i.d. family of $\rm{U}[0,1)$-r.v.s. \qed
\end{thm}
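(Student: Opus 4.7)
The plan is to induct on $k$, at each stage using Kallenberg's transfer theorem (coding lemma) to convert conditional distributions into Borel functions of fresh independent $\rm{U}[0,1)$ variables, and to derive middle-symmetry of the resulting functions from the symmetry of the conditional laws they encode. Using a Borel isomorphism, each $A_i$ may be identified with $[0,1)$ during the construction and the representing functions composed back with it afterwards.

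The base case $k=0$ is immediate: $X_\emptyset$ is a single $A_0$-valued random variable, and the transfer theorem produces a Borel $f_0:[0,1)\to A_0$ with $f_0(U_\emptyset) \stackrel{\rm{law}}{=} X_\emptyset$, trivially middle-symmetric. For the inductive step, the restricted subarray $(X_e)_{|e|\leq k-1}$ is still exchangeable, so by the inductive hypothesis there exist middle-symmetric $g_0,\ldots,g_{k-1}$ and i.i.d.\ uniforms $(V_a)_{|a|\leq k-1}$ with $(X_e)_{|e|\leq k-1} \stackrel{\rm{law}}{=} \hat{g}((V_a)_{|a|\leq k-1})$. It remains to adjoin fresh uniforms $(V_e)_{|e|=k}$ together with a middle-symmetric $f_k$ on $[0,1)^{\P[k]}$ whose outputs at each $e$ of size $k$ correctly realize $X_e$ jointly with everything already constructed.

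The crucial structural input is the Aldous-Hoover dissociation: conditionally on the $S_\bbN$-invariant $\sigma$-algebra $\I$ together with the subarray $(X_f)_{|f|<k}$, the family $(X_e)_{|e|=k}$ is mutually independent, with each $X_e$ depending on this conditioning only through the finite block $(X_f)_{f \subsetneq e}$. I would prove this by exploiting, for any distinct $e,e'$ of size $k$, permutations $\s \in S_\bbN$ that swap $e\setminus e'$ with fresh indices outside $e\cup e'$ while fixing $e\cap e'$; exchangeability then propagates conditional independence between $X_e$ and $X_{e'}$ through a reverse-martingale argument, and a Hewitt-Savage type zero-one law identifies the relevant tail $\sigma$-algebra with $\I$. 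This dissociation is the heart of the Hoover-Aldous theory and is the main obstacle in the argument.

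Granted dissociation, the transfer theorem supplies a Borel $f_k:[0,1)^{\P[k]}\to A_k$ such that, with $V_{[k]}\sim\rm{U}[0,1)$ fresh and independent of $(V_a)_{|a|<k}$, the variable $f_k((V_a)_{a\subseteq[k]})$ realizes the conditional law of $X_{[k]}$ given $(X_f)_{f\subsetneq[k]}$. That conditional law is $S_{[k]}$-invariant by exchangeability under permutations fixing $\bbN\setminus[k]$ pointwise, so $f_k$ may be chosen middle-symmetric (the transfer theorem affords enough flexibility to enforce invariance via a symmetric measurable selector). Applying $f_k$ coordinatewise at every $e$ with $|e|=k$, the dissociation property guarantees that the joint law of the full level-$k$ slab, together with the already-constructed lower levels, agrees with that of $(X_e)_{|e|\leq k}$, completing the induction.
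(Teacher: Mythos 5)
First, note that the paper does not prove this statement at all: it is quoted verbatim from Kallenberg (Theorem 7.22 of the cited monograph) and used as a black box, so there is no internal proof to compare against. Your proposal must therefore be judged on its own terms, and it contains a genuine gap at exactly the point you yourself flag as ``the heart of the Hoover--Aldous theory''.

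The claimed dissociation property is false as stated. You assert that, conditionally on the $S_\bbN$-invariant $\sigma$-algebra $\I$ together with the observed subarray $(X_f)_{|f|<k}$, the top-level variables $(X_e)_{|e|=k}$ are mutually independent, each depending on the conditioning only through $(X_f)_{f\subsetneq e}$. Take $k=2$ with $A_0$ and $A_1$ trivial, let $(\eps_i)_{i\in\bbN}$ be i.i.d.\ uniform $\pm1$ signs, and set $X_{\{i,j\}}:=\eps_i\eps_j$. This array is exchangeable and dissociated (subarrays over disjoint index sets are independent), so $\I$ is trivial, and the lower levels carry no information; your claim would then force $(X_e)_{|e|=2}$ to be i.i.d.\ fair signs. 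But $X_{\{1,2\}}X_{\{1,3\}}X_{\{2,3\}}=1$ identically, so these variables are not mutually independent. The point is that the correct conditional independence holds only given \emph{latent} lower-level uniforms $(U_a)_{|a|<k}$ --- here the hidden signs $\eps_i$ --- which are in general not measurable with respect to the observed array, because the directing functions need not be injective. Constructing such a latent family jointly and consistently with the whole array (and only then outsourcing the remaining top-level noise) is the actual content of the theorem, and an induction that conditions only on the observed subarray cannot recover it. (The appeal to ``a symmetric measurable selector'' to force middle-symmetry of $f_k$ is also unjustified as stated, but that is a secondary issue next to the failure of the dissociation step.)
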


The companion Equivalence Theorem, which addresses the non-uniqueness of the representing function $\hat{f}$, will be recalled later.

To produce a random measure, the idea will simply be to use directing functions $f_i$ that depend on two sources of randomness, and then condition on one of them.

\vspace{7pt}

\noindent\textbf{Theorem A}\quad \emph{Suppose that $\bs{\mu}$ is an ERM on $A_0\times \cdots \times A_k^{\bbN^{(k)}}$.  Then there are middle-symmetric Borel functions
\[f_i:([0,1)\times [0,1))^{\P[i]}\to A_i\]
such that
\[\bs{\mu}(\cdot) \stackrel{\rm{law}}{=} \sfP\big(\hat{f}((U_e,V_e)_{|e|\leq k}) \in \cdot\,\big|\,(U_e)_{|e|\leq k}\big),\]
where $U_e$ and $V_e$ for $e \subseteq \bbN$, $|e| \leq k$ are all i.i.d. $\sim \rm{U}[0,1)$. On the right-hand side, this is a measure-valued random variable as a function of the r.v.s $(U_e)_{|e|\leq k}$. }

\vspace{7pt}

We will find that after some manipulation of the problem, Theorem A can be deduced from the Representation Theorem and Equivalence Theorems for exchangeable random arrays themselves.

The proof of Theorem A can be considerably simplified when $k=1$, so we will first prove that case separately.  In that case, the structure given by Theorem A is essentially a combination of examples (3) and (5) above.  To see this, we reformulate the result as follows.

Given a standard Borel space $A$, let $B([0,1),\Pr\,A)$ denote the space of Lebesgue-a.e. equivalence classes of measurable functions $[0,1)\to \Pr\,A$.  Then $B([0,1),\Pr\,A)$ has a natural measurable structure generated by the functionals
\[f \mapsto \int_0^1\phi(t)f(t,B)\,\d t\]
corresponding to all $\phi \in L^\infty[0,1)$ and Borel subsets $B \subseteq A$.  This measurable structure is also standard Borel: for instance, if one realizes $A$ as a Borel subset of a compact metric space, then the above becomes the Borel structure of the topology of convergence in probability on $B([0,1),\Pr\,A)$, which is Polish.

\vspace{7pt}

\noindent\textbf{Theorem B}\quad \emph{If $\bs{\mu}$ is an ERM on $A^\bbN$, then there is an exchangeable sequence of r.v.s $(\bs{\l}_i)_{i \in \bbN}$ taking values in $B([0,1),\Pr\,A)$ such that
\[\bs{\mu}(\cdot) \stackrel{\rm{law}}{=} \int_0^1 \big(\bigotimes_{i \in \bbN} \bs{\l}_i(t,\cdot)\big)\ \d t.\] }

\vspace{7pt}

So when $k=1$, every ERM is a mixture of ERPMs.

With the structure given by Theorem B, one may next apply the de Finetti-Hewitt-Savage Theorem to the sequence $\bs{\l}_i$ to obtain a random measure $\bs{\g}$ on $B([0,1),\Pr\,A)$ such that $\bs{\l}_i$ is obtained by first choosing $\bs{\g}$ and then choosing $\bs{\l}_i$ i.i.d. with law $\bs{\g}$.  We write $\Samp(\bs{\g})$ for the ERM obtained by this procedure, and refer to $\bs{\g}$ as a \textbf{directing random measure} for $\bs{\mu}$.

After proving Theorems A and B, we offer a couple of applications of the case $k=1$.  These applications can also be given higher-dimensional extensions using the cases $k\geq 2$, but those extensions seem less natural.  The reader interested only in the applications need not read the proof of the general case of Theorem A.

The first application is a new proof of the classical Dovbysh-Sudakov Theorem:

\vspace{7pt}

\noindent\textbf{Dovbysh-Sudakov Theorem}\quad \emph{Suppose $(R_{ij})_{i,j \in \bbN}$ is a random matrix which is a.s. positive semi-definite, and is exchangeable in the sense that
\[(R_{\s(i)\s(j)})_{i,j} \stackrel{\rm{law}}{=} (R_{ij})_{i,j} \quad \forall \s \in S_\bbN.\]
Then there are a separable real Hilbert space $\frH$ and an exchangeable sequence $(\xi_i,a_i)_{i\in \bbN}$ of random variables valued in $\frH\times [0,\infty)$ such that
\[(R_{ij})_{i,j} \stackrel{\rm{law}}{=} (\langle \xi_i,\xi_j\rangle + \delta_{ij}a_i)_{i,j},\]
where $\delta_{ij}$ is the Kronecker delta.}

\vspace{7pt}

This first appeared in~\cite{DovSud82}, and more complete accounts were given in~\cite{Hes86} and~\cite{Pan09}.  The proofs of Hestir and Panchenko start with the Aldous-Hoover Representation Theorem, which treats $(R_{ij})_{i,j}$ as a general two-dimensional exchangeable array.  They then require several further steps to show that the PSD assumption implies a simplification of that general Aldous-Hoover representation into the form promised above.  On the other hand, we will find that if one simply interprets $(R_{ij})_{i,j}$ as the covariance matrix of an exchangeable random measure, then one can read off the Dovbysh-Sudakov Theorem from Theorem B, which in turn does not require the Aldous-Hoover Theorem.

Our second application is to the study of certain mean-field spin glass models, and particularly Viana and Bray's dilute version of the Sherrington-Kirkpatrick model~\cite{ViaBra85}.  In the case of the original Sherrington-Kirkpatrick model a great deal has now been proven, much of it relying on the notions of `random overlap structures' and their directing random Hilbert space measures: see, for instance, Panchenko's monograph~\cite{Pan--book}.  The analogous theory for dilute models is less advanced.  In this note we will simply sketch how the main conjecture of Replica Symmetry Breaking can be formulated quite neatly in terms of limits of exchangeable random measures, translating from the earlier works~\cite{PanTal04,Pan--dil}.  We will not recall most of the spin glass theory behind this conjecture, but will refer the reader to those references for more background.

\subsubsection*{Acknowledgements}

I am grateful to Kavita Ramanan, Dmitry Panchenko and the anonymous referees for several helpful suggestions and references.

\section{The replica trick}

The key to Theorem A is the simple observation that the law of a random measure on some space $E$ can be equivalently described by the law of a random sequence in $E$, obtained by first sampling and quenching that random measure, and then sampling i.i.d. from it.  This idea is standard in the more general setting of representing quasi-factors in ergodic theory~(\cite[Chapter 8]{Gla03}).  In a sense, it is an abstract version of the `replica trick' from the statistical physics of spin glasses~(\cite{MezParVir--book}).  In physics, the phrase `replica trick' usually refers to the calculation of the sequence of moments of the (random) partition function of a random Gibbs measure, which is then fed into an ansatz for guessing more about the law of the partition function, such as the expected free energy.  This resembles our `replica trick' insofar as computing a moment of the random partition function amounts to computing the partition function for the law of several i.i.d. samples from the random Gibbs measure.

Before we proceed, first observe that, since any standard Borel space is isomorphic to a Borel subset of a compact metric space, we may replace the spaces $A_0$, \ldots, $A_k$ with such enveloping compact spaces in Theorems A and B, and so assume these spaces are themselves compact.  We will make this assumption throughout the proofs of those theorems, although some non-compact examples will re-appear later in the applications.

\begin{prop}[Replica trick]\label{prop:replicas}
If $\bs{\mu}$ is an ERM on $\prod_{i\leq k}A_i^{\bbN^{(i)}}$, then there are auxiliary standard Borel spaces $\ol{A}_0$, $\ol{A}_1$, \ldots, $\ol{A}_k$ and an exchangeable array $(Y_e,X_e)_{|e| \leq k}$ of random variables such that
\begin{itemize}
\item each $(Y_e,X_e)$ takes values in $\ol{A}_{|e|}\times A_{|e|}$, and
\item one has
\[\bs{\mu}(\cdot) \stackrel{\rm{law}}{=} \sfP((X_e)_{|e| \leq k} \in \cdot\,|\,(Y_e)_{|e| \leq k}).\]
\end{itemize}
\end{prop}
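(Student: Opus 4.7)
My plan is to implement the ``replica trick'' directly, as advertised in the preamble of Section 2. First, enlarge the probability space so that, conditional on $\bs{\mu}$, there is an i.i.d. sequence $X^{(0)},X^{(1)},X^{(2)},\ldots$ of points in $E := \prod_{i\leq k}A_i^{\bbN^{(i)}}$, each with (conditional) law $\bs{\mu}$. Writing $X^{(n)} = (X^{(n)}_e)_{|e|\leq k}$, set $\ol{A}_i := A_i^\bbN$ (standard Borel because each $A_i$ is) and define
\[X_e := X^{(0)}_e, \qquad Y_e := (X^{(n)}_e)_{n\geq 1} \in \ol{A}_{|e|}.\]
It then remains only to check the two bullet points in the statement.

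For exchangeability, I would note that permuting indices by $\s \in S_\bbN$ turns $(X^{(n)}_e)_{n\geq 0,\,|e|\leq k}$ into $(T^\s X^{(n)})_{n\geq 0}$. Conditionally on $\bs{\mu}$ the latter is an i.i.d. sequence with common law $T^\s_\ast\bs{\mu}$, while the original is i.i.d. with law $\bs{\mu}$. The hypothesis $\bs{\mu}\stackrel{\rm{law}}{=}T^\s_\ast\bs{\mu}$, combined with a change of variable in the disintegration of the joint law over $\bs{\mu}$, identifies these two joint laws and so gives the desired exchangeability of $(Y_e,X_e)_{|e|\leq k}$.

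For the conditional-law identity, the key content is that $\bs{\mu}$ must be measurable with respect to the $\sigma$-algebra generated by $(Y_e)_{|e|\leq k}$, equivalently by the replicas $(X^{(n)})_{n\geq 1}$. Granting this, since $X^{(0)}$ is conditionally independent of those replicas given $\bs{\mu}$ and has conditional law $\bs{\mu}$, a direct calculation gives $\sfP(X\in\cdot\,|\,(Y_e))=\bs{\mu}(\cdot)$, as required.

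Thus the only substantive step is to recover $\bs{\mu}$ from its replicas, and this is where the compactness reduction flagged just before the proposition is essential: since each $A_i$ may be taken compact metric, $E$ is itself a compact Polish space, and the Varadarajan/Glivenko--Cantelli theorem, applied conditionally on $\bs{\mu}$, implies that the empirical measures $N^{-1}\sum_{n=1}^N\delta_{X^{(n)}}$ converge weakly to $\bs{\mu}$ almost surely. Hence $\bs{\mu}$ is a Borel function of $(X^{(n)})_{n\geq 1}$, finishing the argument. The remaining bookkeeping---placing the replicas on a common probability space using a regular conditional distribution of $\bs{\mu}$---is routine.
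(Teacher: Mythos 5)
Your proposal is correct and follows essentially the same route as the paper's proof: the same coupling of $\bs{\mu}$ with a distinguished sample plus an auxiliary i.i.d.\ sequence of replicas, the same choice $\ol{A}_i = A_i^{\bbN}$ with $Y_e$ recording the replica coordinates, and the same use of compactness to recover $\bs{\mu}$ a.s.\ as the vague limit of the empirical measures of the replicas. The only cosmetic difference is that you invoke Varadarajan/Glivenko--Cantelli where the paper cites the Law of Large Numbers for the same a.s.\ convergence.
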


\begin{proof}
After enlarging the background probability space if necessary, we may couple the random variable $\bs{\mu}$ with a doubly-indexed family of random variables
\begin{eqnarray}\label{eq:big-family}
\big((X_{i,e})_{i\in\bbN,e \in \bbN^{(\leq k)}},(X_e)_{e \in \bbN^{(\leq k)}}\big),
\end{eqnarray}
all taking values in one of the $A_i$s, as follows:
\begin{itemize}
\item first, sample the random measure $\bs{\mu}$ itself;
\item then, choose the sub-families $(X_e)_{|e|\leq k}$, $(X_{1,e})_{|e| \leq k}$, $(X_{2,e})_{|e| \leq k}$, \ldots independently with law $\bs{\mu}$.
\end{itemize}

In notation, this coupling is defined by
\begin{multline*}
\sfP\big((X_e)_{|e| \leq k} \in \d\bs{a},\,(X_{1,e})_{|e| \leq k} \in \d \bs{a}_1,\,(X_{2,e})_{|e| \leq k} \in \d \bs{a}_2,\,\ldots\,\big|\,\bs{\mu}\big)\\
= \bs{\mu}(\d \bs{a})\cdot \bs{\mu}(\d \bs{a}_1)\cdot \bs{\mu}(\d \bs{a}_2)\cdot \cdots.
\end{multline*}

Having done this, let $\ol{A}_i := A_i^\bbN$ and let $Y_e := (X_{j,e})_{j\in \bbN} \in \ol{A}_{|e|}$ for each $e \in \bbN^{(\leq k)}$.  The exchangeability of $\bs{\mu}$ implies that the joint distribution of the family~(\ref{eq:big-family}) is invariant under applying elements of $S_\bbN$ to the indexing sets $e$, and hence that the process $(Y_e,X_e)_{|e|\leq k}$ is exchangeable.

On the other hand, since we assume each $A_i$ is compact, so is $\prod_{i \leq k}A_i^{\bbN^{(i)}}$, and now the Law or Large Numbers shows that in the above process one has the a.s. convergence of empirical measures
\[\frac{1}{N}\sum_{n=1}^N \delta_{(X_{n,e})_{|e|\leq k}} \to \bs{\mu}\]
in the vague topology on $\Pr\prod_{i\leq k}A_i^{\bbN^{(i)}}$.

Therefore in the process
\[\big((Y_e)_{|e|\leq k},(X_e)_{|e|\leq k},\bs{\mu}\big)\]
the family of r.v.s $(Y_e)_{|e|\leq k}$ determine $\bs{\mu}$ a.s., whereas conditionally on $\bs{\mu}$ the family $(Y_e)_{|e|\leq k}$ becomes independent from $(X_e)_{|e|\leq k}$.  This implies that
\[\sfP\big((X_e)_{|e| \leq k}\in \cdot\,\big|\,(Y_e)_{|e|\leq k}\big) = \sfP\big((X_e)_{|e| \leq k}\in \cdot\,\big|\,\bs{\mu}\big) = \bs{\mu}(\cdot)\quad \hbox{a.s.},\]
as required.
\end{proof}

\section{Proofs in one dimension}

\subsection{Some preliminaries}

We will repeatedly need the following standard tool from measure-theoretic probability.  See, for instance, the slightly-stronger Theorem 6.10 in~\cite{Kal02}.

\begin{lem}[Noise-Outsourcing Lemma]\label{lem:NOL}
Suppose that $A$ and $B$ are standard Borel spaces and that $(X,Y)$ is an $(A\times B)$-valued r.v.  Then, possibly after enlarging the background probability space, there are a r.v. $U\sim \rm{U}[0,1)$ coupled with $X$ and $Y$ and a Borel function $f:A\times [0,1)\to Y$ such that $U$ is independent from $X$ and 
\[(X,Y) = (X,f(X,U)) \quad \hbox{a.s.}.\] \qed
\end{lem}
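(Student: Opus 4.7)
The plan is to reduce to the case where $B$ is a Borel subset of $[0,1]$, disintegrate the law of $Y$ given $X$ into a Borel family of conditional distributions, and then recover $Y$ from $X$ and a single extra uniform $U$ by applying the conditional quantile transform, using an auxiliary uniform randomizer to break ties at atoms.

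First I would invoke the fact that every standard Borel space is Borel-isomorphic to a Borel subset of $[0,1]$, and replace $B$ by such a subset; thus we may assume $Y$ is $[0,1]$-valued. Since $A$ and $B$ are standard Borel, there is a regular conditional probability kernel
\[\kappa : A\times \B([0,1]) \to [0,1], \qquad \sfP(Y \in \cdot \,|\, X) = \kappa(X,\cdot)\ \text{a.s.},\]
with $x\mapsto \kappa(x,C)$ Borel for every Borel $C$. Setting $F_x(t):= \kappa(x,[0,t])$ and defining the generalized inverse
\[f(x,u) := \inf\{t \in [0,1]\,:\,F_x(t) \geq u\}, \qquad (x,u) \in A\times [0,1),\]
joint Borel measurability of $(x,t)\mapsto F_x(t)$ passes to $f$, and $f(x,\cdot)_\ast \mathrm{Leb} = \kappa(x,\cdot)$ for every $x$.

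Next I would enlarge the background probability space to carry a $V\sim \rm{U}[0,1)$ that is independent of $(X,Y)$, and set
\[U := F_X(Y{-}) + V\cdot\big(F_X(Y) - F_X(Y{-})\big).\]
Working pointwise in $X=x$, the usual conditional probability integral transform---with $V$ spreading each atom of $\kappa(x,\cdot)$ uniformly across the corresponding jump interval of $F_x$---shows that the conditional law of $U$ given $X$ is $\rm{U}[0,1)$. This simultaneously delivers that $U$ is uniform, that $U$ is independent of $X$, and that $f(X,U) = Y$ almost surely.

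The main subtlety is verifying that $U$ is genuinely independent of $X$ (rather than merely uniform in law) and that the equality $f(X,U) = Y$ holds almost surely rather than only in distribution; both follow from the pointwise-in-$x$ analysis above, using the kernel form of $\kappa$ on a standard Borel space and the fact that the auxiliary randomizer $V$ matters only on the at-most-countable atoms of each $\kappa(x,\cdot)$. Pulling back along the Borel isomorphism $B \hookrightarrow [0,1]$ at the end recovers the desired $f: A\times [0,1) \to B$.
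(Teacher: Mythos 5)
Your proof is correct. Note that the paper does not actually prove this lemma: it is quoted with a reference to the (slightly stronger) transfer theorem, Theorem 6.10 in Kallenberg's \emph{Foundations of Modern Probability}, so there is no in-paper argument to compare against. What you have written is the standard self-contained proof of that cited result in the form needed here: Borel-isomorph $B$ into $[0,1]$, disintegrate via a regular conditional kernel $\kappa(x,\cdot)$, take $f(x,\cdot)$ to be the conditional quantile function, and recover $U$ by the randomized probability integral transform (the ``distributional transform''), with an auxiliary uniform $V$ spreading the atoms. Two small points are worth tidying. First, the quantile function $f(x,u)=\inf\{t:F_x(t)\ge u\}$ need only land in the image of $B$ for $(\rm{law}(X)\otimes\rm{Leb})$-a.e.\ $(x,u)$; since $f$ is Borel and $f(x,\cdot)_\ast\rm{Leb}=\kappa(x,\cdot)$ is carried by that image, the exceptional set is Borel and null, and you should redefine $f$ there to a fixed point of $B$ so that $f$ is genuinely $B$-valued everywhere. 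Second, the a.s.\ identity $f(X,U)=Y$ requires handling not only the atoms (which your $V$ takes care of) but also the intervals of constancy of $F_x$: if $F_x$ is flat on $(a,y]$ and $y$ is not an atom then $f(x,F_x(y))<y$, but the set of such $y$ is contained in a countable union of $\kappa(x,\cdot)$-null intervals, so the identity still holds a.s. Both are routine, and with them your argument is complete; it is also essentially how the cited theorem is proved in Kallenberg, so you have reproduced rather than replaced the intended argument.
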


Of course, the function $f$ in this lemma is highly non-unique.  The degenerate case in which $X$ is deterministic is still important: it reduces to the assertion that for any standard Borel probability space $(B,\nu)$ there is a Borel function $f:[0,1)\to B$ such that $f(U) \sim \nu$ when $U \sim \rm{U}[0,1)$.

Finally, let us recall the full de Finetti-Hewitt-Savage Theorem for the case $k=1$, which is rather stronger than just the case $k=1$ of Theorem~\ref{thm:Kal1}.  The following is the combination of Proposition 1.4 and Corollaries 1.5 and 1.6 in~\cite{Kal05}.

\begin{thm}\label{thm:deFHS}
Suppose $A$ is a compact metric space and $(X_n)_n$ is an exchangeable sequence of $A$-valued r.v.s.  Then the sequence of empirical distributions
\[W_N := \frac{1}{N}\sum_{n=1}^N \delta_{X_n} \in \Pr\,A\]
converges a.s. to a $(\Pr\,A)$-valued r.v. $W$ which has the following properties:
\begin{itemize}
\item[(i)] $W$ is a.s. a function of $(X_n)_n$,
\item[(ii)] $W$ generates the tail $\s$-algebra of $(X_n)_n$ up to $\mu$-negligible sets;
\item[(iii)] the r.v.s $X_n$ are conditionally i.i.d. given $W$;
\item[(iv)] if $Z$ is any other r.v. on the same probability space such that
\[(Z,X_1,X_2,\ldots) \stackrel{\rm{law}}{=} (Z,X_{\s(1)},X_{\s(2)},\ldots) \quad \forall \s \in S_\bbN,\]
then $Z$ is conditionally independent from $(X_n)_n$ over $W$. \qed
\end{itemize}
\end{thm}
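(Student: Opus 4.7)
The plan is to prove the four items essentially together, by combining a reverse-martingale argument for the convergence with a symmetrization argument for the conditional independence statements.

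First I would establish a.s. convergence $W_N \to W$ in $\Pr\,A$. Since $A$ is compact metric, $\Pr\,A$ is weak-$\ast$ compact and metrizable, so it suffices to prove a.s. convergence of $W_N(f) = \frac{1}{N}\sum_{n=1}^N f(X_n)$ for each $f$ in a countable dense subset $\cal{F}$ of $C(A)$. For fixed $f \in \cal{F}$, the bounded exchangeable sequence $(f(X_n))_n$ admits the classical one-dimensional de Finetti theorem: the symmetric averages form a reverse martingale on the exchangeable $\s$-algebra $\cal{E}$, hence converge a.s. to an $\cal{E}$-measurable limit. Intersecting full-measure events over $\cal{F}$ yields $W$ and shows (i). The Hewitt-Savage 0-1 law gives $\cal{E} = \cal{T}$ modulo null sets for an exchangeable sequence, so $W$ is tail-measurable, which gives one inclusion of (ii).

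The heart of the proof is (iii). I would show that for any $k$ and any bounded Borel $\phi_1,\ldots,\phi_k : A \to \bbR$,
\begin{equation*}
\sfE\bigl[\phi_1(X_1)\cdots\phi_k(X_k)\,\big|\,W\bigr] = \mprod_{j=1}^k \int \phi_j\,\d W \quad \hbox{a.s.}
\end{equation*}
The argument is by symmetrization: since $W$ is a function of $(X_n)_n$ that is invariant under finite permutations, exchangeability yields
\begin{equation*}
\sfE\bigl[\phi_1(X_1)\cdots\phi_k(X_k)\,\big|\,W\bigr] = \frac{1}{(N)_k}\sum_{(i_1,\ldots,i_k)} \sfE\bigl[\phi_1(X_{i_1})\cdots\phi_k(X_{i_k})\,\big|\,W\bigr],
\end{equation*}
the sum ranging over ordered $k$-tuples of distinct elements of $[N]$. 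Boundedness of the $\phi_j$'s means that enlarging the sum to all $k$-tuples costs $O_k(1/N)$ uniformly, and the enlarged sum factors as $\mprod_j W_N(\phi_j)$. Letting $N\to\infty$ and using $W_N\to W$ in the weak-$\ast$ topology gives the claim on continuous $\phi_j$'s; a monotone-class argument extends it to all bounded Borel $\phi_j$'s, yielding (iii). The remaining inclusion $\cal{T}\subseteq\s(W)$ for (ii) now follows: under (iii) the conditional law of $(X_n)_n$ given $W$ is a product, so its tail $\s$-algebra is $W$-trivial by Kolmogorov's 0-1 law.

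For (iv) I would rerun the same symmetrization, but in the presence of $Z$. The hypothesis says that $(Z,X_1,X_2,\ldots) \stackrel{\rm{law}}{=} (Z,X_{\s(1)},X_{\s(2)},\ldots)$ for every $\s \in S_\bbN$, which is exactly what is needed to justify averaging $\sfE[g(Z)\phi_1(X_1)\cdots\phi_k(X_k)\mid W]$ over distinct $k$-tuples $(i_1,\ldots,i_k)\in [N]^k$. The same $N\to\infty$ argument then gives
\begin{equation*}
\sfE\bigl[g(Z)\phi_1(X_1)\cdots\phi_k(X_k)\,\big|\,W\bigr] = \sfE\bigl[g(Z)\,\big|\,W\bigr]\cdot\mprod_{j=1}^k \int \phi_j\,\d W,
\end{equation*}
from which the conditional independence of $Z$ and $(X_n)_n$ over $W$ follows by another monotone-class argument. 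The main obstacle I anticipate is the bookkeeping in (iv): one must ensure that $W$, built a.s. from $(X_n)_n$ alone, interacts correctly with the joint invariance involving $Z$, and one must control the $O_k(1/N)$ error terms uniformly inside conditional expectations. These are routine once (i)--(iii) are in place, so the real conceptual content is concentrated in the symmetrization step for (iii).
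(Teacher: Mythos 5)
The paper offers no proof of this statement: it is quoted verbatim from Kallenberg (Proposition 1.4 and Corollaries 1.5 and 1.6 of \cite{Kal05}), so there is no internal argument to compare against. Your proof is the standard one for that result --- reverse-martingale convergence of the symmetric averages to produce $W$, then symmetrization over distinct index tuples with an $O_k(1/N)$ diagonal correction to identify the conditional law of the sequence given $W$ as a product --- and it is sound, including the less commonly written-out part (iv), where the joint invariance of $(Z,X_1,X_2,\ldots)$ is exactly what licenses running the same averaging with the factor $g(Z)$ carried along inside the conditional expectation.

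Two small points to tighten. First, the Hewitt--Savage 0--1 law (triviality of the exchangeable $\sigma$-algebra for i.i.d.\ sequences) does not by itself give $\mathcal{E}=\mathcal{T}$ modulo null sets for a general exchangeable sequence; that identification is a further step (condition on the directing measure and apply Hewitt--Savage conditionally). You do not actually need it: the inclusion $\sigma(W)\subseteq\mathcal{T}$ modulo null sets is immediate because the Ces\`aro limit defining $W$ is unchanged if finitely many initial terms are dropped, so $W$ is tail-measurable outright, and your Kolmogorov 0--1 argument after (iii) supplies the reverse inclusion. Second, the step $\sfE[\phi_1(X_{i_1})\cdots\phi_k(X_{i_k})\mid W]=\sfE[\phi_1(X_{1})\cdots\phi_k(X_{k})\mid W]$ a.s.\ deserves one explicit line: it holds because $W$ is (a.s.\ equal to) a finite-permutation-invariant measurable function of the sequence, so $(W,X_1,X_2,\ldots)$ and $(W,X_{\sigma(1)},X_{\sigma(2)},\ldots)$ agree in law for every finitely supported $\sigma$, and one then integrates both sides against bounded $\sigma(W)$-measurable test functions. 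With those clarifications the proof is complete.
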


\subsection{Proofs in one dimension}

\begin{proof}[Proof of Theorem A in one dimension]
Suppose $\bs{\mu}$ is an ERM on $A^\bbN$ and let $(Y_n,X_n)_n$ be a process as given by Proposition~\ref{prop:replicas}.

We next apply Theorem~\ref{thm:deFHS} twice: first to the sequence $(Y_n)_n$, to obtain a r.v. $W$ taking values in $E := \Pr\,\ol{A}$; and secondly to $(Y_n,X_n)_n$, to obtain a r.v. $Z$ taking values in $F := \Pr\,(\ol{A}\times A)$.  From their definitions as limits of empirical distributions, $W$ is almost surely a function of $Z$.  On the other hand, property (iv) of Theorem~\ref{thm:deFHS} gives that $Z$ is conditionally independent from $(Y_n)_n$ over $W$.

Now pick an $n \in \bbN$. By Lemma~\ref{lem:NOL}, there is a Borel function $f_1:E\times F\times \ol{A}\times [0,1)\to A$ such that
\[(W,Z,Y_n,X_n) \stackrel{\rm{law}}{=} (W,Z,Y_n,f_1(W,Z,Y_n,V_n)),\]
where $(V_n)_n$ are i.i.d. $\sim \rm{U}[0,1)$ and are independent from $(W,Z,Y_n)$.  Moreover, this same $f_1$ works for every $n$, by exchangeability.  It follows that in fact
\begin{eqnarray}\label{eq:first-law-eq}
\big(W,Z,(Y_n,X_n)_{n\in\bbN}\big) \stackrel{\rm{law}}{=} \big(W,Z,(Y_n,f_1(W,Z,Y_n,V_n))_{n\in\bbN}\big),
\end{eqnarray}
because both sides have the same marginals for individual $n$, and both sides are conditionally i.i.d. over $(W,Z)$, so all finite-dimensional marginals agree.

Next, another appeal to Lemma~\ref{lem:NOL} gives a Borel function $g:E\times [0,1)\to F$ such that
\[(W,Z) \stackrel{\rm{law}}{=} (W,g(W,V))\]
with a new independent $V \sim \rm{U}[0,1)$.  This implies that
\[(W,Z,(Y_n)_{n\in \bbN}) \stackrel{\rm{law}}{=} (W,g(W,V),(Y_n)_{n\in\bbN}),\]
because $Z$ is conditionally independent from $(Y_n)_n$ over $W$, so again all finite-dimensional marginals agree.
Combining this with~(\ref{eq:first-law-eq}) gives
\[(W,Z,(Y_n,X_n)_{n\in\bbN}) \stackrel{\rm{law}}{=} (W,g(W,V),(Y_n,f_2(W,Y_n,V,V_n))_{n\in\bbN}),\]
where
\[f_2(w,y,v,v') := f_1(w,g(w,v),y,v').\]

It follows that
\[\sfP((X_n)_n \in \cdot\,|\,(Y_n)_n) = \sfP\big((f_2(W,Y_n,V,V_n))_n \in \cdot\,\big|\,(Y_n)_n\big).\]
Finally, we may apply de Finetti's Theorem again, this time to $(Y_n)_n$, to obtain Borel functions $h_1:[0,1)\to E$ and $h_2:[0,1)^2\to \ol{A}$ such that
\[(W,(Y_n)_{n\in\bbN}) \stackrel{\rm{law}}{=} \big(h_1(U),(h_2(U,U_n))_{n\in\bbN}\big),\]
where $U$ and $(U_n)_n$ are i.i.d. $\sim \rm{U}[0,1)$ r.v.s, independent of everything else.  Letting
\[f(u,u',v,v') := f_2(h_1(u),h_2(u,u'),v,v'),\]
substituting for $(W,(Y_n)_n)$ in the above gives
\[\sfP((X_n)_n \in \cdot\,|\,(Y_n)_n) \stackrel{\rm{law}}{=} \sfP\big((f(U,U_n,V,V_n))_n \in \cdot\,\big|\,U,(U_n)_n\big),\]
as required.
\end{proof}

\begin{proof}[Proof of Theorem B]
By the Law of Iterated Conditional Expectation, the representation obtained above may be re-written as
\begin{eqnarray*}
\bs{\mu}(\cdot) &\stackrel{\rm{law}}{=}& \sfE\Big(\sfP\big((f(U,U_n,V,V_n))_n \in \cdot\,\big|\,U,(U_n)_n,V\big)\,\Big|\,U,(U_n)_n\Big)\\
&=& \sfE\Big(\bigotimes_{n\in\bbN}\sfP\big(f(U,U_n,V,V_n) \in \cdot\,\big|\,U,U_n,V\big)\,\Big|\,U,(U_n)_n\Big)\\
&=& \int_0^1 \Big( \bigotimes_{n \in \bbN}\bs{\l}_n(t,\,\cdot\,)\Big)\,\d t,
\end{eqnarray*}
where
\[\bs{\l}_n(t,\,\cdot\,) := \sfP(f(U,U_n,t,V_n) \in \cdot\,|\,U,U_n).\]
As functions of $(U,U_n)$ for each $n$, these form an exchangeable sequence of r.v.s taking values in $B([0,1),\Pr\,A)$, so the proof is complete.
\end{proof}

\subsection{Relation to row-column exchangeability}\label{subs:RCE}

A relative of exchangeability for a two-dimensional random array $(X_{i,n})_{(i,n)\in\bbN^2}$ is \textbf{row-column exchangeability}, which asserts that
\[(X_{\s(i),\tau(n)})_{i,n} \stackrel{\rm{law}}{=} (X_{i,n})_{i,n} \quad \forall \s,\tau \in S_\bbN.\]
Since $\s$ and $\tau$ may be chosen separately, this is a rather stronger symmetry than ordinary two-dimensional exchangeability.  Here, too, there is a representation theorem due to Aldous and Hoover, and also a version in arbitrary dimensions due to Kallenberg, who calls such arrays `separately exchangeable'.

\begin{thm}[Corollary 7.23 in~\cite{Kal05}]\label{thm:RCE}
If $(X_{i,n})_{i,n}$ is an $A$-valued row-column exchangeable array then there is a Borel function $[0,1)^4 \to A$ such that
\[(X_{i,n})_{i,n} \stackrel{\rm{law}}{=} (f(Z,U_i,V_n,W_{i,n}))_{i,n},\]
where $Z$, $U_i$ for $i\in\bbN$, $V_n$ for $n \in \bbN$ and $W_{i,n}$ for $(i,n) \in \bbN^2$ are i.i.d. $\sim \rm{U}[0,1)$. \qed
\end{thm}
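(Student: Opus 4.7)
The plan is to deduce Theorem~\ref{thm:RCE} from Theorem~B together with two applications of de Finetti's theorem, in the spirit of the replica trick used for Theorem~A. First, regard each row as a single $A^\bbN$-valued r.v.\ $R_i:=(X_{i,n})_{n\in\bbN}$: row exchangeability makes $(R_i)_{i\in\bbN}$ an exchangeable sequence in $A^\bbN$, so Theorem~\ref{thm:deFHS} yields a directing random measure $\bs{\mu}\in\Pr\,A^\bbN$ with the rows conditionally i.i.d.\ $\sim\bs{\mu}$ and $\bs{\mu}$ a.s.\ equal to the vague limit of $\frac{1}{N}\sum_{i\leq N}\delta_{R_i}$.

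The crucial observation is that $\bs{\mu}$ is itself an ERM on $A^\bbN$ for the column action of $S_\bbN$: for each $\tau\in S_\bbN$, the permuted array $(X_{i,\tau(n)})_{i,n}$ has the same joint law as $(X_{i,n})_{i,n}$ by row--column exchangeability, its row-empirical measures converge a.s.\ to $\tau_\ast\bs{\mu}$, and hence $\tau_\ast\bs{\mu}\stackrel{\rm{law}}{=}\bs{\mu}$. Theorem~B then gives an exchangeable sequence $(\bs{\l}_n)_{n\in\bbN}$ of $B([0,1),\Pr\,A)$-valued r.v.s---now indexed by the \emph{column} variable---satisfying $\bs{\mu}(\cdot)\stackrel{\rm{law}}{=}\int_0^1\bigotimes_n\bs{\l}_n(t,\cdot)\,\d t$, and applying Theorem~\ref{thm:deFHS} once more to the exchangeable sequence $(\bs{\l}_n)_n$ produces a further directing random measure $\bs{\rho}$ on $B([0,1),\Pr\,A)$ under which the $\bs{\l}_n$ are conditionally i.i.d.

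Combining these layers, the array can be generated in distribution by: sample $\bs{\rho}$; then $\bs{\l}_n\sim\bs{\rho}$ i.i.d.\ over $n$; then $t_i\sim\rm{U}[0,1)$ i.i.d.\ over $i$; and finally $X_{i,n}\sim\bs{\l}_n(t_i,\cdot)$ independently across $(i,n)$. Iterated use of Lemma~\ref{lem:NOL} parametrizes $\bs{\rho}$ as a Borel function of some $Z\sim\rm{U}[0,1)$, each $\bs{\l}_n$ as a Borel function of $(Z,V_n)$, and each draw $X_{i,n}$ as a Borel function of $(\bs{\l}_n,t_i,W_{i,n})$, with $V_n$ and $W_{i,n}$ fresh i.i.d.\ uniforms; writing $U_i:=t_i$ and composing yields the required Borel $f:[0,1)^4\to A$ with $X_{i,n}\stackrel{\rm{law}}{=}f(Z,U_i,V_n,W_{i,n})$. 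The main obstacle is the coupling bookkeeping: Theorem~B produces only a distributional representation of $\bs{\mu}$, so one must work on an enlarged probability space in which the mixture, the further directing measure $\bs{\rho}$, and the conditional row-sampling step all hold simultaneously, and use exchangeability at each transfer via Lemma~\ref{lem:NOL} to force the \emph{same} Borel function to work for all pairs $(i,n)$.
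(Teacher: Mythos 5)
Your proposal is correct and follows essentially the same route as the paper: de Finetti in one index direction to extract a directing measure $\bs{\mu}$ on $A^\bbN$, the observation that exchangeability in the other index makes $\bs{\mu}$ an ERM, Theorem B, and then a further application of de Finetti plus the Noise-Outsourcing Lemma to assemble the four-variable directing function. The only differences are cosmetic: you transpose the roles of rows and columns (so your $U_i$ and $V_n$ are swapped relative to the paper's), and you verify the ERM property via a.s.\ convergence of empirical measures where the paper invokes uniqueness of the de Finetti decomposition.
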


An alternative proof of Theorem B can be given via Theorem~\ref{thm:RCE}.  One begins with the construction of the two-dimensional random array $(X_{i,n})_{i,n}$ as in the proof of Proposition~\ref{prop:replicas} (where the sets $e$ have become singletons $n$). Since this array is row-column exchangeable, the Representation Theorem gives
\[(X_{i,n})_{i,n} \stackrel{\rm{law}}{=} (f(U,U_i,V_n,W_{i,n}))_{i,n}\]
for some Borel directing function $f:[0,1)^4\to A$, where $U$, $U_i$ for $i\in\bbN$, $V_n$ for $n \in \bbN$ and $W_{i,n}$ for $i,n \in \bbN$ are i.i.d. $\sim\rm{U}[0,1)$.
One can now read off a directing random measure $\g(U)$ on $B([0,1),\Pr\,A)$, a function of $U \sim \rm{U}[0,1)$, in the following two steps:
\begin{itemize}
\item first, for each fixed $U$ and $U'$ one obtains an element $\l(U,U') \in B([0,1),\Pr\,A)$ according to
\[\l(U,U')(t,\,\cdot\,) = \sfP_W(f(U,U',t,W) \in \cdot\,),\quad W \sim \rm{U}[0,1);\]
\item second, $\g(U)$ is the distribution of $\l(U,U')$ where $U' \sim \rm{U}[0,1)$.
\end{itemize}
On the other hand, a couple of simple applications of the Noise-Outsourcing Lemma show that any directing random measure $\g$ on $B([0,1),\Pr\,A)$ can be represented this way, so this gives a bijective correspondence
\begin{multline*}
\{\hbox{directing random measures on}\ B([0,1),\Pr\,A)\ \hbox{up to equivalence}\} \\ \leftrightarrow \{\hbox{directing functions}\ [0,1)^4\to A\ \hbox{up to equivalence}\},
\end{multline*}
where `up to equivalence' refers to the possibility that different directing random measures or directing functions may give rise to the same row-column exchangeable array.

This approach is the basis of the paper~\cite{Pan--dil}, to be discussed later.  It is quick, but at the expense of assuming Theorem~\ref{thm:RCE}.

On the other hand, our approach to Theorem B does not use any exchangeability theory in dimensions greater than one. Moreover, one can reverse the idea above to give a fairly quick proof of Theorem~\ref{thm:RCE} using Theorem B.

\begin{proof}[Proof of Theorem~\ref{thm:RCE} from Theorem B]
First let $(X_{i,n})_{i,n} = ((X_{i,n})_i)_n$, thought of as an exchangeable sequence of $A^\bbN$-valued r.v.s.  By the de Finetti-Hewitt-Savage Theorem applied to the exchangeability in $n$, its law is a mixture of product measures; equivalently, there is a $(\Pr\,A^\bbN)$-valued r.v. $\bs{\mu}$ such that
\[\rm{law}((X_{i,n})_i)_n) = \sfE(\bs{\mu}^{\otimes \bbN}).\]
On the other hand, for any $\s \in S_\bbN$ the exchangeability in $i$ gives
\[\sfE(\bs{\mu}^{\otimes \bbN}) = \rm{law}((X_{i,n})_i)_n)  = \rm{law}((X_{\s(i),n})_i)_n) = \sfE((T^\s_\ast\bs{\mu})^{\otimes \bbN}),\]
where $T^\s:A^\bbN\to A^\bbN$ is the corresponding coordinate-permuting transformation.  By the uniqueness of the de Finetti-Hewitt-Savage decomposition, this implies
\[\bs{\mu} \stackrel{\rm{law}}{=} T^\s_\ast\bs{\mu} \quad \forall \s \in S_\bbN,\]
so $\bs{\mu}$ is an ERM.  Therefore Theorem B gives
\[\rm{law}((X_{i,n})_i)_n) = \sfE\Big(\int_0^1\bigotimes_{i \in \bbN}\bs{\l}_i(t,\cdot)\,\d t\Big)^{\otimes \bbN}\]
for some exchangeable random sequence $(\bs{\l}_i)_i$ taking values in $B([0,1),\Pr\,A)$.

Next, applying the de Finetti-Hewitt-Savage Theorem to the sequence $(\bs{\l}_i)_i$ itself gives a Borel function $F:[0,1)^2\to B([0,1),\Pr\,A)$ such that the above becomes
\begin{multline}\label{eq:law-of-RCE}
\rm{law}((X_{i,n})_i)_n) = \sfE_{Z,(U_i)_i}\Big(\int_0^1\bigotimes_{i \in \bbN}F(Z,U_i,t,\cdot)\,\d t\Big)^{\otimes \bbN}\\
= \sfE_{Z,(U_i)_i,(V_n)_n}\bigotimes_{(i,n)\in\bbN^2}F(Z,U_i,V_n,\cdot),
\end{multline}
where $Z$, $U_i$ for $i\in\bbN$ and $V_n$ for $n \in \bbN$ are i.i.d. $\sim \rm{U}[0,1)$, and in the second equality we have simply changed notation from `$\int_0^1 \cdot\, \d t$' to `$\sfE_{V_n}$'.

Finally, by Lemma~\ref{lem:NOL} there is a Borel function $f:[0,1)^4\to A$ such that
\[\sfP(f(Z,U,V,W)\in\, \cdot\,|\,Z,U,V) = F(Z,U,V,\cdot) \quad \hbox{a.s.}\]
when $Z$, $U$, $V$, $W$ are i.i.d. $\sim \rm{U}[0,1)$, and now the right-hand side of~(\ref{eq:law-of-RCE}) becomes $\rm{law}((f(U,U_i,V_n,W_{i,n})_i)_n)$, as required.
\end{proof}

\section{Proof in higher dimensions}

Not all parts of Theorem~\ref{thm:deFHS} generalize to higher-dimensional arrays, and instead we must make a more careful argument using the Equivalence Theorem~\ref{thm:Kal2} below.

\subsection{Some more preliminaries}

The Equivalence Theorem characterizes when two functions direct the same process in the setting of Theorem~\ref{thm:Kal1}.  Its formulation needs the following notion.  Let $C = [0,1)^r$ and $D = [0,1)^s$ for some $r,s \in \bbN$.  Then a skew-product tuple $(f_0,\ldots,f_k)$ in which each $f_i:\prod_{j\leq i}C^{[i]^{(j)}} \to D$ for each $i$ gives rise to a skew-product-type function $\hat{f}:C^{\P[k]}\to D^{\P[k]}$, which is a map between Euclidean cubes.  We will write that the skew-product tuple is \textbf{Lebesgue-measure-preserving} if for all $i=0,1,\ldots,k$ and all $(x_a)_{a\subsetneqq [i]} \in C^{\P[i]\setminus [i]}$, one has
\[U\sim \rm{U}(C) \quad \Longrightarrow \quad f_i\big((x_a)_{a \subsetneqq [i]},U\big) \sim \rm{U}(D).\]
This implies, in particular, that $\hat{f}$ pushes Lebesgue measure on $C^{\P[k]}$ to Lebesgue measure on $D^{\P[k]}$ (although these cubes have different dimensions if $r\neq s$).  However, the assertion that the skew-product tuple is Lebesgue-measure-preserving can be strictly stronger than this, in case the functions $f_i$ are not all injective.

The Equivalence Theorem is as follows.

\begin{thm}[Equivalence Theorem for directing functions; Theorem 7.28 in~\cite{Kal05}]\label{thm:Kal2}
If $\hat{f},\hat{f}':[0,1)^{\P[k]}\to A_0\times \cdots \times A_i^{[k]^{(i)}}\times \cdots \times A_k$ are functions of skew-product type such that
\[\big(f_{|e|}((U_a)_{a \subseteq e})\big)_{|e|\leq k} \stackrel{\rm{law}}{=} \big(f_{|e|}'((U_a)_{a \subseteq e})\big)_{|e| \leq k},\]
then there are functions $\hat{G},\hat{G'}:[0,1)^{\P[k]}\to [0,1)^{\P[k]}$ of skew-product type, whose skew-product tuples are Lebesgue-measure-preserving, and which make the following diagram commute:
\begin{center}
$\phantom{i}$\xymatrix{
& [0,1)^{\P[k]}\ar_{\hat{G}}[dl]\ar^{\hat{G'}}[dr]\\
[0,1)^{\P[k]}\ar_{\hat{f}}[dr] && [0,1)^{\P[k]}\ar^{\hat{f}'}[dl]\\
& A_0\times A_1^k\times \cdots \times A_k.
}
\end{center}
\end{thm}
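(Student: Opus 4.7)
The plan is to prove the Equivalence Theorem by induction on $k$, constructing the skew-product tuples $(G_i)_{i=0}^k$ and $(G'_i)_{i=0}^k$ one level at a time and repeatedly invoking Noise-Outsourcing (Lemma~\ref{lem:NOL}). Throughout, the index set $\P[k]$ is partially ordered by inclusion, and the level-$i$ map $G_i$ is built to depend only on the source variables indexed by subsets of $[i]$.

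Base case $k=0$: here $f_0, f'_0:[0,1)\to A_0$ satisfy $f_0(U) \stackrel{\rm{law}}{=} f'_0(U)$. I first couple uniform $U, U'$ on $[0,1)$ so that $f_0(U) = f'_0(U')$ almost surely, which is possible by the equality in law: draw $Y$ from the common distribution and then use two applications of Noise-Outsourcing to produce $U$ and $U'$ with $f_0(U) = Y = f'_0(U')$. I then encode the pair $(U, U')$ as a single uniform $V$ via a Borel measure-preserving isomorphism from the joint law on $[0,1)^2$ to Lebesgue measure on $[0,1)$, adjoining an auxiliary independent uniform if the joint law is degenerate. Setting $G_0(V) := U$ and $G'_0(V) := U'$ gives Lebesgue-measure-preserving $G_0, G'_0$ with $f_0 \circ G_0 = f'_0 \circ G'_0$ a.s.

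Inductive step: assume the result at depth $k-1$, and consider depth-$k$ directing functions with equal laws. Apply the inductive hypothesis to the truncated tuples $(f_0,\ldots,f_{k-1})$ and $(f'_0,\ldots,f'_{k-1})$ to obtain middle-symmetric, Lebesgue-measure-preserving $(G_i)_{i<k}$ and $(G'_i)_{i<k}$ making the diagram commute through level $k-1$. For the top level, condition on the source variables $(V_a)_{a \subsetneq [k]}$: the lower-level outputs of $\hat{f} \circ \hat{G}$ and $\hat{f}' \circ \hat{G}'$ are then fixed to the same values, and the conditional laws of $f_k(\cdot)$ and $f'_k(\cdot)$ in the remaining top-level argument still agree, since the full joint laws agree. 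A parametric version of the base-case construction---Noise-Outsourcing with the lower-level variables serving as parameters---then yields $G_k, G'_k$ depending measurably on $(V_a)_{a \subseteq [k]}$ and Lebesgue-measure-preserving in the top-level argument $V_{[k]}$.

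The main obstacle is ensuring middle-symmetry of the new $G_k, G'_k$: the fiberwise coupling is canonical only up to Borel isomorphism, and a naive fiberwise choice would typically break invariance under the $S_{[k]}$-action on the indices $a \subsetneq [k]$. I would resolve this by making a symmetry-aware measurable selection, leveraging the middle-symmetry of $f_k$ and $f'_k$ so that the required coupling of the top-level variables is invariant under the induced $S_{[k]}$-action on the inputs from strictly smaller subsets, and then applying a measurable selector on the quotient. One must then verify that the resulting $G_k, G'_k$ remain Lebesgue-measure-preserving in the top slot and still satisfy $f_k \circ \hat{G} = f'_k \circ \hat{G}'$ at level $k$. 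This symmetric measurable selection, together with the measurability of the conditional couplings across fibers, is the most delicate part of the argument.
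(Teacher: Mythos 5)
The paper itself gives no proof of this theorem: it is imported verbatim as Theorem 7.28 of~\cite{Kal05}, so your attempt can only be judged on its own terms. The base case and the overall skeleton (induction on levels, Noise-Outsourcing, encoding a coupling of $(U,U')$ into a single uniform source) are reasonable, but the inductive step contains a genuine gap at its central claim: that once the lower-level sources $(V_a)_{a\subsetneqq[k]}$ have been coupled so that the \emph{outputs} at levels $<k$ agree, ``the conditional laws of $f_k(\cdot)$ and $f'_k(\cdot)$ in the remaining top-level argument still agree, since the full joint laws agree.'' The conditional law of the top-level output given the lower-level \emph{sources} is not a function of the lower-level \emph{outputs}: the variables $U_a=G_{|a|}(\cdots)$ and $U'_a=G'_{|a|}(\cdots)$ generally carry strictly more information than $f_{|a|}(\cdots)$ and $f'_{|a|}(\cdots)$, and nothing in your inductive hypothesis forces the fiberwise laws $\mathcal{L}\big(f_k((U_a)_{a\subsetneqq[k]},\cdot)\big)$ and $\mathcal{L}\big(f'_k((U'_a)_{a\subsetneqq[k]},\cdot)\big)$ to coincide. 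Matching these conditional laws is not a corollary of the equality of the joint laws of the arrays; it is essentially the entire content of the theorem, and it is exactly what fails if the lower-level coupling is chosen only to match outputs.

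A symptom of the gap is that your argument never uses the fact that the hypothesis is an equality in law of the full $\bbN$-indexed arrays (i.e.\ exchangeability over infinitely many indices); as written it would ``prove'' the same statement for arrays indexed by $\P[k]$ alone, where it is false. For instance, with $k=1$, $A_0$ trivial and $A_1=\{0,1\}$, the functions $f_1(u_\emptyset,u_1)=1[u_1<1/2]$ and $f'_1(u_\emptyset,u_1)=1[u_1<u_\emptyset]$ give the same law for a single entry, yet no measure-preserving recoding of the level-$0$ source can make the conditional laws of the two level-$1$ outputs agree fiberwise. What rescues the theorem is that equality of the infinite arrays determines much more (for $k=1$, the de Finetti directing measure; for $k\geq 2$, a whole hierarchy of conditional distributions along sub-arrays), and the induction must be strengthened so that the level-$(<k)$ coupling matches these conditional distributions rather than just the outputs. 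Establishing that such a coupling exists is where Kallenberg's proof does its real work, via the coding and conditional-independence lemmas of Chapter 7 of~\cite{Kal05}; by comparison, the middle-symmetry of $G_k$, which you single out as the most delicate point, is a secondary issue.
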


In connection with this theorem, we will also need the following `factorization' result.

\begin{cor}\label{cor:rep-skew-prod}
Let $U_{\subseteq [k]} = (U_e)_{e\subseteq [k]}$ and $V_{\subseteq [k]}$ be independent uniform r.v.s valued in $[0,1)^{\P[k]}$.  If
\[G:[0,1)^{\P[k]}\to [0,1)^{\P[k]}\]
is a function of skew-product type whose skew-product tuple is Lebesgue-measure-preserving, then there is another function
\[H:([0,1)\times [0,1))^{\P[k]}\to [0,1)^{\P[k]}\]
of skew-product type, whose skew-product tuple is Lebesgue-measure-preserving, and such that
\[U_{\subseteq [k]} = G\big(H(U_{\subseteq [k]},V_{\subseteq [k]})\big) \quad \hbox{a.s.}.\]
\end{cor}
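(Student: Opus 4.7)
The plan is to construct the skew-product tuple $(h_0,\ldots,h_k)$ of $H$ by induction on $i$, treating each $h_i$ as a ``random right-inverse'' to $g_i$: since $g_i$ is measure-preserving in its last coordinate, each fibre $g_i((x_a)_{a\subsetneqq [i]},\cdot)^{-1}(u)$ carries a natural conditional distribution on $[0,1)$, and the auxiliary variable $V_{[i]}$ is used to sample from it.

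For the base case $i = 0$, apply Lemma~\ref{lem:NOL} to the pair $(g_0(X), X)$ with $X \sim \rm{U}[0,1)$ to obtain a Borel function $h_0:[0,1)^2 \to [0,1)$ such that $h_0(u, V)$ samples the conditional law of $X$ given $g_0(X) = u$.  Then $g_0(h_0(u, v)) = u$ a.s., and the marginal of $h_0(U, V)$ for independent uniforms matches that of $X$, so $h_0(U, V) \sim \rm{U}[0,1)$ --- the Lebesgue-measure-preserving property at level~$0$.  For the inductive step, suppose $h_0, \ldots, h_i$ have been constructed so that
\[g_{|e|}\big((h_{|a|}((U_b,V_b)_{b \subseteq a}))_{a \subseteq e}\big) = U_e \quad\hbox{a.s.}\]
for every $|e| \leq i$.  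Write $z = (U_b, V_b)_{b \subsetneqq [i+1]}$ for the lower inputs and set $w(z) := (h_{|a|}((U_b,V_b)_{b \subseteq a}))_{a \subsetneqq [i+1]}$, a Borel function of $z$.  By hypothesis the map $X \mapsto g_{i+1}(w(z), X)$ preserves $\rm{U}[0,1)$ for each fixed $z$, so another application of Lemma~\ref{lem:NOL}, regarding $(z, u)$ as input and $X \sim \rm{U}[0,1)$ as output with $u := g_{i+1}(w(z), X)$, yields a Borel $h_{i+1}$ such that $h_{i+1}(z,(u,v))$ samples the conditional law of $X$ given $(z, u)$.  The defining identity $g_{i+1}(w(z), h_{i+1}(z,(u, v))) = u$ then holds a.s., and since $(u, h_{i+1}(z,(u,v)))$ and $(g_{i+1}(w(z), X), X)$ have the same joint law, we get $h_{i+1}(z,(U, V)) \sim \rm{U}[0,1)$ for independent uniforms $(U, V)$, giving the Lebesgue-measure-preserving property at level $i+1$.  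Middle-symmetry of $h_{i+1}$ is inherited from that of $g_{i+1}$ and the previously constructed $h_j$'s, the underlying conditional kernel being symmetric in the intermediate indices (modify on a null set if necessary to obtain pointwise symmetry).

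At level $i = k$ the induction yields $g_k(\ldots) = U_{[k]}$ a.s., and by middle-symmetry of each $g_{|e|}$ the analogous identity holds at every $e \subseteq [k]$, so $G(H(U,V)) = U$ a.s.  The main technical point is ensuring the Borel dependence of the disintegration on the parameter $z$ and the preservation of middle-symmetry through the recursion --- both are routine on standard Borel spaces, but the bookkeeping between the canonical top index $[i]$ and an arbitrary $e$ invites notational errors.
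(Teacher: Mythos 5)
Your proof is correct and takes essentially the same route as the paper's: an induction on the level $i$ in which, at each step, the Noise-Outsourcing Lemma is applied to the pair consisting of all previously available inputs together with the new output $U_{[i]}$ on one side and the hidden preimage variable on the other, so that the fresh noise $V_{[i]}$ samples the conditional law on the fibre of $g_i$. The ``random right-inverse'' framing is exactly how the paper uses that lemma, and your closing remarks on measure-preservation and middle-symmetry only make explicit points the paper leaves implicit.
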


Another way to express this is that the maps in the following diagram come from Lebesgue-measure-preserving skew-product tuples and a.s. commute:
\begin{center}
$\phantom{i}$\xymatrix{
([0,1)\times [0,1))^{\P[k]}\ar_{\Pi}[drr]\ar^-H[rr] && [0,1)^{\P[k]}\ar^G[d]\\
&& [0,1)^{\P[k]},
}
\end{center}
where
\[\Pi\big((x_e,y_e)_{e\subseteq [k]}\big) = (x_e)_{e\subseteq [k]}\]
is the obvious projection.

Geometrically, the intuition here is that $G$ is `almost onto' (since its image measure is Lebesgue), and that as a result one can represent it as the projection map $\Pi$ after using $H$ to `straighten out the fibres'.

\begin{proof}
Let $G$ be defined by the skew-product tuple $(G_0,\ldots,G_k)$. We must construct the skew-product tuple $(H_0,\ldots,H_k)$ that defines $H$.  In terms of these tuples, our requirement is that
\begin{eqnarray}\label{eq:g-h-eq}
G_i\big(\big(H_{|e|}((U_a,V_a)_{a\subseteq e})\big)_{e\subseteq [i]}\big) = U_{[i]} \quad \hbox{a.s.} \quad \forall i = 0,1,\ldots,k.
\end{eqnarray}

When $i=0$ this simplifies to
\[G_0(H_0(U_0,V_0)) = U_0 \quad \hbox{a.s.}.\]
We can obtain such an $H_0$ from the Noise-Outsourcing Lemma~\ref{lem:NOL} as follows.  Let $Z_0$ be a $\rm{U}[0,1)$-r.v. and let $X_0 := G_0(Z_0)$, so this is also $\sim\rm{U}[0,1)$.  Applying Lemma~\ref{lem:NOL} to the pair $(X_0,Z_0)$ gives a Borel function $H_0:[0,1)\times [0,1)\to [0,1)$ such that
\[(X_0,Z_0) = (X_0,H_0(X_0,Y_0)) \quad \hbox{a.s.}\]
for some $Y_0 \sim \rm{U}[0,1)$ independent from $X_0$.  Since $X_0 = G_0(Z_0)$, applying $G_0$ to the second coordinates here gives
\[X_0 = G_0(H_0(X_0,Y_0)) \quad \hbox{a.s.}\]

The general case now follows by induction on $i$. Suppose that $i\geq 1$, let $Y_e$ for $e \subsetneqq [i]$ and $Z_e$ for $e \subseteq [i]$ be i.i.d. $\sim \rm{U}[0,1)$; define $X_e := G_{|e|}((Z_a)_{a\subseteq e})$ for all $e \subseteq [i]$; and assume that $H_0$, \ldots, $H_{i-1}$ have already been constructed such that
$Z_e := H_{|e|}((X_a,Y_a)_{a\subseteq e})$ for each $e \subsetneqq [i]$. Applying Lemma~\ref{lem:NOL} again gives a Borel function $H_i:([0,1)\times [0,1))^{\P[i]}\to [0,1)$ and a r.v. $Y_{[i]}\sim \rm{U}[0,1)$ such that
\[\big((X_e)_{e\subseteq [i]},(Y_e)_{e\subsetneqq [i]},Z_{[i]}\big) = \big((X_e)_{e\subseteq [i]},(Y_e)_{e\subsetneqq [i]},H_i\big((X_e)_{e\subseteq [i]},(Y_e)_{e\subseteq [i]}\big)\big),\]
and as before this is equivalent to the desired equality~(\ref{eq:g-h-eq}).
\end{proof}

\subsection{Completion of the proof}

We need the following enhancement of Proposition~\ref{prop:replicas}.

\begin{lem}\label{lem:couple-to-Us}
If $\bs{\mu}$ is an ERM on $\prod_{i\leq k}A_i^{\bbN^{(i)}}$, then there is an exchangeable array $(U_e,X_e)_{|e| \leq k}$ such that
\begin{itemize}
\item $(U_e)_{|e| \leq k}$ are i.i.d. $\sim\rm{U}[0,1)$,
\item each $X_e$ takes values in $A_{|e|}$,
\item and one has
\[\bs{\mu}(\cdot) \stackrel{\rm{law}}{=} \sfP\big((X_e)_{|e|\leq k}\in \cdot\,\big|\,(U_e)_{|e| \leq k}\big).\]
\end{itemize}
\end{lem}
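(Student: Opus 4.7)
The plan is to combine Proposition~\ref{prop:replicas} with the Representation Theorem~\ref{thm:Kal1} applied just to the auxiliary array, and then re-sample the $X$-component through a Markov kernel so that it becomes conditionally driven by i.i.d.\ uniforms.  First, apply Proposition~\ref{prop:replicas} to obtain an exchangeable array $(Y_e, X_e)_{|e|\leq k}$ valued in $\prod_{i\leq k}(\ol{A}_i\times A_i)^{\bbN^{(i)}}$ with
\[\bs{\mu}(\cdot) \stackrel{\rm{law}}{=} \sfP\bigl((X_e)_{|e|\leq k}\in \cdot\,\big|\,(Y_e)_{|e|\leq k}\bigr).\]
Second, apply Theorem~\ref{thm:Kal1} to the $\ol{A}$-valued sub-array $(Y_e)_{|e|\leq k}$ alone to obtain middle-symmetric Borel functions $g_i:[0,1)^{\P[i]}\to\ol{A}_i$ and an i.i.d.\ $\rm{U}[0,1)$ family $(U_e)_{|e|\leq k}$ with
\[(Y_e)_{|e|\leq k} \stackrel{\rm{law}}{=}\bigl(g_{|e|}((U_a)_{a\subseteq e})\bigr)_{|e|\leq k}.\]

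Let $\pi(y)$ denote a regular version of the conditional distribution of $(X_e)_{|e|\leq k}$ given $(Y_e)_{|e|\leq k}=y$ from the first step, so that $\bs{\mu} = \pi((Y_e))$ almost surely.  The joint exchangeability of $(Y_e,X_e)$ forces this kernel to be $S_\bbN$-equivariant, in the sense that $\s_\ast\pi(y) = \pi(\s\cdot y)$ for $\s \in S_\bbN$ and $\rm{law}((Y_e))$-almost every $y$.  Now, on a fresh probability space, I sample $(U_e)_{|e|\leq k}$ as i.i.d.\ $\rm{U}[0,1)$ variables and then draw $(X_e)_{|e|\leq k}$ from the composed Markov kernel
\[K((u_e)_{|e|\leq k},\cdot) \; := \; \pi\bigl((g_{|e|}((u_a)_{a\subseteq e}))_{|e|\leq k}\bigr).\]
This sampling can be implemented by Lemma~\ref{lem:NOL}, which produces $(X_e)$ as a Borel function of $(U_e)$ and one more independent uniform r.v.

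Two verifications remain.  The conditional distribution identity is immediate: setting $Y_e':= g_{|e|}((U_a)_{a\subseteq e})$, one has $\sfP((X_e)\in\cdot\,|\,(U_e)) = K((U_e),\cdot) = \pi((Y_e'))$, and because $(Y_e')\stackrel{\rm{law}}{=} (Y_e)$ the right-hand side has the same law as $\pi((Y_e)) = \bs{\mu}$.  Joint exchangeability of $(U_e, X_e)$ amounts to $S_\bbN$-invariance of the product law $\lambda\otimes K$, where $\lambda$ is product Lebesgue measure on $[0,1)^{\bbN^{(\leq k)}}$.  Using that $g$ is middle-symmetric one has $g(\s\cdot u) = \s\cdot g(u)$, and combining this with the equivariance of $\pi$ gives the key relation $K(\s\cdot u,\cdot) = \s_\ast K(u,\cdot)$; a one-line change of variables using the $\s$-invariance of $\lambda$ then yields the $\s$-invariance of $\lambda\otimes K$.

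The step most requiring care is the joint exchangeability, since the fresh noise used to realise the kernel $K$ is independent of $(U_e)$ and so could, in principle, destroy the $S_\bbN$-symmetry of the enlarged process.  The identity $K(\s\cdot u,\cdot) = \s_\ast K(u,\cdot)$ is precisely what prevents this, and it rests on two ingredients already in hand: the middle-symmetry of the Kallenberg directing functions $g_i$ representing $(Y_e)$, and the $S_\bbN$-equivariance of the regular conditional distribution $\pi$ inherited from the exchangeability of $(Y_e,X_e)$ in the original replica coupling.
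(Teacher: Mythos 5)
Your proof is correct and is essentially the paper's own argument: the measure $\lambda\otimes K$ you construct is exactly the relatively independent product of $\rm{law}((U_e)_e)$ and $\rm{law}((X_e)_e)$ over the condition $(Y_e)_e = \hat{g}((U_e)_e)$ that the paper uses, obtained from the same two ingredients (Proposition~\ref{prop:replicas} plus Theorem~\ref{thm:Kal1} applied to $(Y_e)_e$ alone). Your more explicit verification of joint exchangeability via the equivariance identities $\hat{g}(\sigma\cdot u)=\sigma\cdot\hat{g}(u)$ and $\sigma_\ast\pi(y)=\pi(\sigma\cdot y)$ simply spells out what the paper summarizes as the $S_\bbN$-invariance of both factors of that relative product.
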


\begin{proof}
Let $(Y_e,X_e)_{|e|\leq k}$ be the process given by Proposition~\ref{prop:replicas}, with each $(Y_e,X_e)$ taking values in $\ol{A}_{|e|}\times A_{|e|}$.  By the Representation Theorem~\ref{thm:Kal1} applied to $(Y_e)_{|e|\leq k}$, there is a function $\hat{f}:[0,1)^{\P[k]}\to \prod_{i\leq k}\ol{A}_i^{[k]^{(i)}}$ of skew-product type such that
\[(Y_e)_{|e|\leq k} \stackrel{\rm{law}}{=} \hat{f}((U_e)_{|e|\leq k}),\]
where $(U_e)_{|e| \leq k}$ is an i.i.d. $\sim\rm{U}[0,1)$ array.

Now consider the coupling $(U_e,X_e)_{|e|\leq k}$ whose law is the relatively independent product over the condition $(Y_e)_{|e|\leq k} = \hat{f}((U_e)_{|e|\leq k})$:
\begin{multline}\label{eq:rel-prod}
\sfP((U_e)_e \in \d \bs{u},\,(X_e)_e \in \d \bs{a})\\
 = \sfP((U_e)_e \in \d \bs{u})\cdot \sfP((X_e)_e \in \d \bs{a}\,|\,(Y_e)_e = \hat{f}(\bs{u})).
\end{multline}

This is exchangeable and has the three desired properties.  The exchangeability follows because both factors on the right-hand side of~(\ref{eq:rel-prod}) are invariant under the action of $S_\bbN$ on the indexing set $e$, by the exchangeability of $(U_e)_e$ and $(Y_e,X_e)_e$. The first two of the properties listed are obvious, and the third follows from Proposition~\ref{prop:replicas} because the above relative product formula gives
\[\sfP\big((X_e)_e \in \d\bs{a}\,\big|\,(U_e)_e = \bs{u}\big) = \sfP\big((X_e)_e \in \d\bs{a}\,\big|\,(Y_e)_e = \hat{f}(\bs{u})\big),\]
and we conditioned on the equality $(Y_e)_e = \hat{f}((U_e)_e)$.
\end{proof}

\begin{proof}[Proof of Theorem A]\quad Let the process $(U_e,X_e)_{|e|\leq k}$ be as in the preceding corollary.  Applying Theorem~\ref{thm:Kal1} to this whole process gives functions $\hat{g}:[0,1)^{\P[k]}\to [0,1)^{\P[k]}$ and $\hat{h}:[0,1)^{\P[k]}\to \prod_{i\leq k}A_i^{[k]^{(i)}}$ of skew-product type such that
\begin{eqnarray}\label{eq:f-and-g}
((U_e)_{|e|\leq k},(X_e)_{|e|\leq k}) \stackrel{\rm{law}}{=} \big(\hat{g}((U'_e)_{|e|\leq k}),\hat{h}((U'_e)_{|e|\leq k})\big),
\end{eqnarray}
where again $(U'_e)_{|e|\leq k}$ are i.i.d. $\sim \rm{U}[0,1)$.

For the first coordinates, this reads
\[(U_e)_{|e|\leq k} \stackrel{\rm{law}}{=} \hat{g}((U'_e)_{|e|\leq k}).\]
Since both input and output are i.i.d. $\rm{U}[0,1)$ arrays, we may apply the Equivalence Theorem~\ref{thm:Kal2} to this equality of laws: it gives functions $G,G':[0,1)^{\P[k]}\to [0,1)^{\P[k]}$ of skew-product type whose skew-product tuples are Lebesgue-measure-preserving and which make the following diagram commute:
\begin{center}
$\phantom{i}$\xymatrix{
 [0,1)^{\P[k]}\ar_G[dd]\ar^{G'}[dr]\\
& [0,1)^{\P[k]}\ar^{\hat{g}}[dl]\\
[0,1)^{\P[k]}.
}
\end{center}
(Note that this seems almost unnecessary, since $\hat{g}$ already sends Lebesgue measure on $[0,1)^{\P[k]}$ to itself.  However, we will need the structure of Lebsgue-measure-preserving skew-product tuples, which need not follow in case $\hat{g}$ is not injective: see the remarks immediately preceding Theorem~\ref{thm:Kal2}.)

Now applying Corollary~\ref{cor:rep-skew-prod} to $G$, one obtains a function $H:([0,1)\times [0,1))^{\P[k]}\to [0,1)^{\P[k]}$ of skew-product type, given by a Lebesgue-measure-preserving skew-product tuple, and such that the above commutative diagram can be enlarged to
\begin{center}
$\phantom{i}$\xymatrix{
([0,1)\times [0,1))^{\P[k]}\ar^-H[rr]\ar_\Pi[ddrr] && [0,1)^{\P[k]}\ar_G[dd]\ar^{G'}[dr]\\
&&& [0,1)^{\P[k]}\ar^{\hat{g}}[dl]\\
&&[0,1)^{\P[k]}.
}
\end{center}

Let $(V_e)_{|e|\leq k}$ be another collection of i.i.d. $\rm{U}[0,1)$-r.v.s independent from $(U_e)_{|e|\leq k}$, and let $\hat{f} := \hat{h}\circ G' \circ H$.  Then the above diagram implies that:
\begin{itemize}
\item on the one hand,
\[(U_e)_{|e|\leq k} = \hat{g}(G'(H((U_e,V_e)_{|e|\leq k}))) \quad \hbox{a.s.},\]
\item and on the other, 
$G'(H((U_e,V_e)_{|e|\leq k}))$ is an i.i.d. array of $\rm{U}[0,1)$-r.v.s, and so
\begin{multline*}
(\hat{g}((U'_e)_{|e|\leq k}),\hat{h}((U'_e)_{|e|\leq k}))\\ \stackrel{\rm{law}}{=} 
\big(\hat{g}(G'(H((U_e,V_e)_{|e|\leq k}))),\hat{h}(G'(H((U_e,V_e)_{|e|\leq k})))\big).
\end{multline*}
\end{itemize}

Combining~(\ref{eq:f-and-g}) with these two facts now gives
\begin{eqnarray*}
(U_e,X_e)_{|e|\leq k} &\stackrel{\rm{law}}{=}&
\big(\hat{g}(G'(H((U_e,V_e)_{|e|\leq k}))),\hat{h}(G'(H((U_e,V_e)_{|e|\leq k})))\big)_{|e|\leq k}\\
&\stackrel{\rm{law}}{=}& \big((U_e)_{|e|\leq k},\hat{f}((U_e,V_e)_{|e|\leq k})\big),
\end{eqnarray*}
and conditioning both sides of this on $(U_e)_{|e|\leq k}$ gives
\[\sfP((X_e)_{|e|\leq k}\in \cdot\ |\,(U_e)_{|e|\leq k}) = \sfP(\hat{f}((U_e,V_e)_{|e|\leq k})\in \cdot\ |\,(U_e)_{|e|\leq k}),\]
as required.
\end{proof}

\section{Relation to Dovbysh-Sudakov Theorem}

\begin{proof}[Proof of Dovbysh-Sudakov Theorem]
The trick to this is the standard one-to-one correspondence
\[\big\{\hbox{PSD $(\bbN\times \bbN)$-matrices}\big\} \leftrightarrow \big\{\hbox{Gaussian measures on $\bbR^\bbN$}\big\}\]
in which a Gaussian measure is identified with its variance-covariance matrix. (This is elementary for finite PSD matrices, and then the infinite case follows by the Daniell-Kolmogorov Theorem: see~\cite[Theorem 6.14]{Kal02}.)  Because Gaussian measures are uniquely determined by their variance-covariance matrices, this correspondence intertwines the two permutations actions of $\bbN$, so from $(R_{ij})_{i,j}$ we may construct an ERM $\bs{\mu}$ on $\bbR^\bbN$ which is almost surely Gaussian, and such that
\[R_{ij} = \int_{\bbR^\bbN} x_i x_j\,\bs{\mu}(\d(x_n)_{n\in\bbN}) \quad \hbox{a.s.}.\]
Now Theorem B gives a representation
\[\bs{\mu} \stackrel{\rm{law}}{=} \int_0^1 \bigotimes_i\bs{\l}_i(t,\cdot)\,\d t\]
with $(\bs{\l}_i)_i$ drawn from some exchangeable sequence taking values in $B([0,1),\Pr\,\bbR)$.  Substituting this above gives
\[R_{ii} \stackrel{\rm{law}}{=} \int_0^1 \int_\bbR x^2\,\bs{\l}_i(t,\d x)\,\d t\]
and
\[R_{ij} \stackrel{\rm{law}}{=} \int_0^1 \big(\int_\bbR x\,\bs{\l}_i(t,\d x)\big)\big(\int_\bbR x\,\bs{\l}_j(t,\d x)\big)\,\d t \quad \hbox{when}\ i \neq j.\]
Letting
\[\frH = L^2([0,1),\d t),\]
\[\xi_i(t) = \int_\bbR x\,\bs{\l}_i(t,\d x)\]
and
\[a_i = \int_0^1 \Big(\int_\bbR x^2\,\bs{\l}_i(t,\d x) - \Big(\int_\bbR x\,\bs{\l}_i(t,\d x)\Big)^2\Big)\,\d t,\]
this is the desired representation.

(Note that $\xi_i$ must be in $\frH$ a.s. because
\[\int_0^1 \xi_i(t)^2\,\d t = \int_0^1\Big(\int_\bbR x\,\bs{\l}_i(t,\d x)\Big)^2\,\d t \leq \int_0^1\int_\bbR x^2\,\bs{\l}_i(t,\d x)\,\d t \stackrel{\rm{law}}{=} R_{ii},\]
which is finite a.s.)
\end{proof}

\section{Limiting behaviour of the Viana-Bray model}

Our second, and much more tentative, application for ERMs is to the study of the Viana-Bray (`VB') model~\cite{ViaBra85}.  This is the basic `dilute' mean-field spin glass model.  On the configuration space $\{-1,1\}^N$, it is given by the random Hamiltonian
\begin{eqnarray}\label{eq:VBHam}
H_N(\s) = \sum_{k=1}^M J_k\s_{i_k}\s_{j_k},
\end{eqnarray}
where:
\begin{itemize}
\item $M$ is a Poisson r.v. with mean $\a N$ (the thermodynamic limit is be taken with $\a$ fixed);
\item $i_1$, $j_1$, $i_2$, $j_2$, \ldots are indices from $[N]$ chosen uniformly and independently at random;
\item and $J_1$, $J_2$, \ldots are i.i.d. symmetric $\bbR$-valued r.v.s with some given distribution, often taken to be uniform $\pm 1$.
\end{itemize}
(There are many essentially equivalent variants of this model, but this popular version will do here.) From a quenched choice (that is, a fixed sample) of the random function $H_N$, the objects of interest are the resulting Gibbs measure
\[\g_{\b,N}\{\s\} = \frac{1}{Z_N(\b)}\exp (- \b H_N(\s)),\]
the partition function
\[Z_N(\b) = \sum_\s\exp(-\b H_N(\s))\]
and the expected specific free energy
\begin{eqnarray}\label{eq:expspec}
F_N(\b) = \frac{1}{N}\sfE\log Z_N(\b),
\end{eqnarray}
where the expectation is over the random function $H_N$.  We will sometimes drop the subscript `$\b$' or `$N$' in the sequel.

This is a relative of the older Sherrington-Kirkpatrick (`SK') model~\cite{MezParVir--book}, in which all pairs of spins $ij$ interact according to independent random coefficients $g_{ij}\sim \rm{N}(0,1/N)$.  The rigorous study of the SK model has become quite advanced in recent years; we will not credit all of the important contributions, but refer the reader to the books~\cite{Tal--SGbook,Pan--book} and the many references given there.  By contrast, most properties of the VB model remain conjectural.

A key tool in the study of the SK model is the use of random measures on Hilbert space as a kind of `limit object' for the random Gibbs measures $\g_{\b,N}$ as $N\to\infty$.  Viewing $\frac{1}{\sqrt{N}}\{-1,1\}^N$ as a subset of $\ell_2^N$, $\g_{\b,N}$ is itself a random Hilbert space measure, and the appropriate notion of convergence is convergence in distribution of the Gram-de Finetti matrices obtained by sampling.   Having taken a limit in this sense, a limit object in the form of a random measure on Hilbert space is provided by the Dovbysh-Sudakov Theorem.  This use of exchangeability and limit objects originates in works of Arguin~\cite{Arg08} and Arguin and Aizenman~\cite{ArgAiz09}, with a precedent in the study of classical mean-field models in the work~\cite{FanSpoVer80} of Fannes, Spohn and Verbeure.  It is explained in more detail in~\cite{Pan--book}.

The key point for this use of random Hilbert-space measures is that the main properties of the SK model, such as the free energy, really depend only on the covariances among the random variables $H(\s)$, and hence on this Hilbert space structure.  This is no longer true for the VB model, so a more refined tool is needed.  One possibility has been explored in~\cite{Pan--dil}, and before that physicists and mathematicians had already worked with the related notion of `multi-overlap structures' (see, e.g.,~\cite{DeS04,DeSFra09}, and also~\cite{PanTal04}, although the latter does not use that terminology).

Here we will simply propose exchangeable random measures as a fairly intuitive equivalent formalism, and compare it with two predecessors from the literature: the weighting schemes used by Panchenko and Talagrand in~\cite{PanTal04}, and Panchenko's use of directing functions in~\cite{Pan--dil}.  After introducing our notion of `limit object', we will give a fairly brisk summary of the translations between these formalisms; the calculations are all routine.  We will restrict attention to the Viana-Bray model as above for simplicity, but the discussion could easily be extended to a more general class of dilute models, as in~\cite{PanTal04,Pan--dil}.

\subsection{Basic idea}

If $\g_{\b,N}$ is as above, then it defines an ERM $\bs{\mu}$ by sampling: first quench the random measure $\g_{\b,N}$; then select samples (called `replicas') $\s^1$, $\s^2$, \ldots $\in \{-1,1\}^N$ i.i.d. $\sim \g_N$; and finally use these to define $\bs{\mu}$ as a mixture of delta masses:
\begin{eqnarray}\label{eq:emp}
\bs{\mu} = \frac{1}{N}\sum_{n=1}^N\delta_{(\s^1_n,\s^2_n,\ldots)}.
\end{eqnarray}
Identifying $\pm 1$ with the extreme points of $\Pr\{-1,1\}$, this is clearly a mixture of ERPMs of the kind considered previously.  Let $\Samp(\g_{\b,N})$ be the law of $\bs{\mu}$.

It now makes sense to say that $\g_{\b,N}$ \textbf{sampling converges} to some random probability measure $\bs{\g}_\b$ on $B([0,1),\Pr\{-1,1\})$ if $\Samp(\g_{\b,N})$ converges to $\Samp(\bs{\g}_\b)$ for the vague topology on $\Pr(\Pr\{-1,1\}^\bbN)$.  This last space is compact, and the laws of exchangeable random measures clearly comprise a further subspace which is closed for the vague topology (since invariance under any given continuous transformation of $\{-1,1\}^\bbN$ is a closed property).  Therefore one can always at least take subsequential limits of $(\Samp(\g_{\b,N}))_N$, and now Theorem B promises the existence of some $\bs{\g}_\b$ that represents the limiting ERM (although it is unique only up to equivalence).

This idea generalizes the more classical use of Gram-de Finetti matrices and their limits recalled above.  Starting from the SK model, the associated Gram-de Finetti matrix is obtained by sampling and then quenching the Gibbs measure, and then sampling from that Gibbs measure a sequence of states in $\{-1,1\}^N$ and computing their inner products as elements of $\ell_2^N$, normalized by $N$ (that is, their `overlaps').  Comparing with the random measure $\bs{\mu}$ in~(\ref{eq:emp}), this Gram-de Finetti matrix may be recovered as simply the (random) matrix of covariances of the different coordinates in $\{-1,1\}^\bbN$ under this (random) measure.

\subsection{Comparison with weighting schemes and directing functions}

In~\cite{PanTal04} the authors do not introduce a notion of limits as such for the random measures $\g_{\b,N}$, but they do formulate their most general results (Section 3 of that paper) in terms of some data that they call a `weighting scheme'.  This consists of:
\begin{itemize}
\item a sequence of $\bbR$-valued r.v.s $(X_k)_k$, and a family $((X^{i,j}_k)_k)_{i,j}$ of i.i.d. copies of this sequence indexed by $(i,j) \in \bbN^2$;
\item and, independently of these, a $[0,1]$-valued random sequence of weights $(v_k)_k$ such that $\sum_k v_k = 1$.
\end{itemize}

These data appear in an upper-bound formula for the free energy which will be recalled below.  They can be encapsulated in a certain directing random measure $\bs{\g}$ on $B([0,1),\Pr\{-1,1\})$ as follows.  First, identifying elements of $\Pr\{-1,1\}$ with their expectations gives
\[B([0,1),\Pr\{-1,1\}) = B([0,1),[-1,1]).\]
Now let $\Phi(x):= \rm{e}^x/(\rm{e}^x + \rm{e}^{-x})$. Applying Lemma~\ref{lem:NOL}, we may find a sequence $(f_k)_k$ in $B([0,1),[-1,1])$ such that
\begin{eqnarray}\label{eq:def-f}
(\Phi(X_k))_k \stackrel{\rm{law}}{=} (f_k(U))_k \quad \hbox{when}\ U\sim \rm{U}[0,1).
\end{eqnarray}
To finish, let $\bs{\g}$ be the atomic random measure
\begin{eqnarray}\label{eq:WS}
\bs{\g} = \sum_{k\geq 1}v_k\delta_{f_k},
\end{eqnarray}
so the randomness of $\bs{\g}$ is derived from the random choice of the weights $v_k$.

Clearly one could find many other ways to convert a weighting scheme into an ERM, but this translation is appropriate because it gives the correct correspondence between upper-bound formulae for the free energy to be recalled below.

On the other hand, in~\cite{Pan--dil} Panchenko does introduce a family of limit objects, closely related to our use of limiting ERMs.  Given the random Gibbs measure $\g_{\b,N}$ on $\{-1,1\}^N$, he draws independent replicas $\s^1$, $\s^2$, \ldots from it and then considers the joint distribution of the whole $(N\times \infty)$-indexed, $\{-1,1\}$-valued random array
\[(\s^\ell_n)_{1 \leq n \leq N,\,\ell \geq 1}.\]

Whereas we used these replicas to form an empirical measure which is an ERM, Panchenko chooses an arbitrary extension of this to a two-dimensional infinite random array.  Letting $N\to\infty$, if one considers a subsequence of the $\g_N$ for which these joint distributions converge, then in the limit one obtains a random $\{-1,1\}$-valued array which is row-column exchangeable.  Applying Theorem~\ref{thm:RCE}, this array has the same law as
\[\big(\s(U,U_n,V_\ell,W_{n\ell})\big)_{n,\ell\geq 1}\]
for some measurable function $\s:[0,1)^4\to \{-1,1\}$, where $U$, $U_n$ for $n\geq 1$, $V_\ell$ for $\ell \geq 1$ and $W_{n\ell}$ for $n,\ell\geq 1$ are i.i.d. $\sim\rm{U}[0,1)$.

Panchenko then uses $\s$ itself as his limit object for the sequence $(\g_{\b,N})_N$.  The equivalence between this formalism and the use of directing random measures on $B([0,1),\Pr\{-1,1\})$ is just the equivalence between our Theorem B and Theorem~\ref{thm:RCE} described in Subsection~\ref{subs:RCE} above.

\subsection{Formula for the limiting free energy}

A central result of~\cite{Pan--dil} is a formula for the asymptotic expected free energy of models such as~(\ref{eq:VBHam}) in terms of a functional of the directing functions introduced above: see~\cite[Theorem 2]{Pan--dil}.  For the VB model itself the result is as follows.

\begin{thm}[Free energy formula]\label{thm:var}
As $N\to\infty$, the expected specific free energy from~(\ref{eq:expspec}) satisfies
\[\lim_{N\to\infty}F_N = \inf_{\s}\P(\s),\]
where for $\s:[0,1)^4\to\{-1,1\}$ we have
\begin{multline*}
\P(\s) := \log 2 + \sfE^{(1)}\log\sfE^{(2)}\Big(\cosh\b\sum_{i=1}^{K_1}J_i \s(W,U,V_i,X_i)\Big)\\
- \sfE^{(1)}\log\sfE^{(2)}\Big(\exp\b\sum_{i=1}^{K_2}J_i \s(W,U,V_i,X_i)\s(W,U,V'_i,X'_i)\Big),
\end{multline*}
where:
\begin{itemize}
\item all the r.v.s $W$, $U$, $V_1$, $V_2$, \ldots, $V_1'$, $V'_2$, \ldots, $X_1$, $X_2$, \ldots, $X_1'$, $X'_2$, \ldots are i.i.d. $\sim\rm{U}[0,1)$,
\item $K_1$ is an independent Poisson r.v. of mean $2\a$,
\item $K_2$ is an independent Poisson r.v. of mean $\a$,
\item and the coefficients $J_i$ are chosen independently from the same distribution as before,
\end{itemize}
and where
\[\sfE^{(1)} = \hbox{expectation over}\ W,K_1,K_2,(V_i)_i,(V'_i)_i\ \hbox{and}\ (J_i)_i\]
and
\[\sfE^{(2)} = \hbox{expectation over}\ U,(X_i)_i\ \hbox{and}\ (X'_i)_i.\]
 \qed
\end{thm}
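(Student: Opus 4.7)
The plan is to invoke Theorem~2 of~\cite{Pan--dil}, Panchenko's free-energy formula for dilute models in the directing-function formalism, and verify that its specialization to the VB Hamiltonian~(\ref{eq:VBHam}) is precisely the statement above. By the correspondence recalled in Subsection~\ref{subs:RCE}, Panchenko's directing function $\s:[0,1)^4\to\{-1,1\}$ encodes the same data as the limiting ERM from our sampling-convergence setup, so the two formalisms are interchangeable; it only remains to match the functional $\P$ written here with his Poisson cavity functional for pair interactions.

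For completeness I sketch the two halves. For the upper bound $\lim_N F_N \leq \inf_\s \P(\s)$, I would follow the Guerra-Toninelli interpolation scheme of~\cite{PanTal04}: given any $\s$, define an interpolating Hamiltonian $H_{N,t}$ for $t\in[0,1]$ which at $t=0$ equals $H_N$ and at $t=1$ decouples into single-spin contributions driven by cavity fields sampled through $\s$. The derivative of the interpolated specific free energy in $t$ has the correct sign by a Poisson-process analogue of Gaussian integration by parts, exploiting that $M\sim\mathrm{Poisson}(\a N)$ and that the edge locations $(i_k,j_k)$ are i.i.d.\ uniform on $[N]$. Integrating in $t$ yields $F_N(\b) \leq \P(\s)$ for every $N$ and every $\s$, hence the upper bound.

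For the matching lower bound, I would apply the Aizenman-Sims-Starr cavity comparison: write $\lim_N F_N$, along any subsequence where the limit exists, as the limit of the one-spin-addition increments $\sfE\log Z_{N+1}-\sfE\log Z_N$. Compactness of the space of laws of ERMs on $\{-1,1\}^\bbN$ lets one pass to a further subsequence along which $\g_{\b,N}$ sampling-converges to some limiting ERM $\bs{\mu}_\infty$, which by Theorem~\ref{thm:RCE} together with Subsection~\ref{subs:RCE} is represented by a directing function $\s^\ast:[0,1)^4\to\{-1,1\}$. A direct calculation with the Poisson disorder rewrites this increment in the limit as $\P(\s^\ast)$, giving $\lim_N F_N \geq \P(\s^\ast) \geq \inf_\s \P(\s)$.

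The main obstacle is the cavity bookkeeping in the lower bound: one must check that adding a single spin to the VB model produces a $\mathrm{Poisson}(2\a)$-distributed family of new incident edges, yielding the $K_1$ $\cosh$-term, together with a compensating $\mathrm{Poisson}(\a)$-distributed family of bulk-edge corrections, yielding the $K_2$ $\exp$-term; and that the resulting expectation factors into the nested $\sfE^{(1)}\log\sfE^{(2)}$ structure thanks to the independence of the disorder $(J_i)$ and the vertex-labels from the ERM-site $U$ associated to the newly added spin. These identifications are routine but depend on the specific Poisson combinatorics of~(\ref{eq:VBHam}); once made, the translation from Panchenko's generic functional to the displayed $\P(\s)$ is immediate.
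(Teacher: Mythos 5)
Your primary plan --- to quote Theorem~2 of~\cite{Pan--dil} and check that its specialization to the Hamiltonian~(\ref{eq:VBHam}) yields the displayed functional $\P(\s)$ --- is exactly what the paper does: the theorem is stated with a \textsc{qed} and no proof, as a direct citation of Panchenko's result in the directing-function formalism. Your additional sketch of the two bounds (Franz--Leone/Guerra-type interpolation for the upper bound, Aizenman--Sims--Starr cavity increments plus sampling-compactness for the lower bound) is a fair outline of how that cited result is actually proved, but it goes beyond anything the paper itself attempts.
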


If $\bs{\g}$ is the random directing measure on $B([0,1),\Pr\{-1,1\})$ that corresponds to $\s$, then the above formula may easily be recast in terms of $\bs{\g}$: it is
\begin{multline*}
\log 2 + \sfE\log \int_B \sum_{\eps_1,\ldots,\eps_{K_1} = \pm 1}\prod_{i=1}^{K_1}f(V_i,\{\eps_i\})\Big(\cosh \b\sum_{i=1}^{K_1} J_i\eps_i\Big)\ \bs{\g}(\d f)\\
 - \sfE\log \int_B \sum_{\scriptsize{\begin{array}{c}\eps_1,\ldots,\eps_{K_2} = \pm 1\\ \eps'_1,\ldots,\eps'_{K_2} = \pm 1\end{array}}}\prod_{i=1}^{K_2}f(V_i,\{\eps_i\})f(V'_i,\{\eps'_i\})\Big(\exp \b\sum_{i=1}^{K_2} J_i\eps_i\eps'_i\Big)\ \bs{\g}(\d f),
\end{multline*}
where
\[B = B([0,1),\Pr\{-1,1\}),\]
and where $\sfE$ is now the expectation over all the random data $\g$, $K_1$, $K_2$, $(V_i)_i$, $(V'_i)_i$ and $(J_i)_i$.

Another elementary (but tedious) calculation shows that under the correspondence~(\ref{eq:WS}) this coincides with the upper-bound expression that appears in~\cite{PanTal04}: the right-hand side of inequality (3.3) in that paper.  It is for the sake of this calculation that one uses the function $\Phi$ to define $f_k$ in~(\ref{eq:def-f}).

\begin{rmk}
In~\cite{Pan--dil} Panchenko also shows that the quantity above is unchanged if one instead takes the infimum only over those directing functions $\s$ that satisfy an analog of the Aizenman-Contucci stability under cavity dynamics.  This modification could also easily be formulated in terms of random directing functions, but we omit it for the sake of brevity. \fin
\end{rmk}

\subsection{The analog of ultrametricity}

After the general formalism of Section 3 of~\cite{PanTal04}, Sections 4 and 5 of that paper propose a special class of weighting scheme objects that correspond to the physicists' notion of `replica-symmetry breaking', and conjecture that these give the correct expression for the limiting free energy.  Following the prescriptions of the preceding subsections, we can translate this conjecture into a proposal for a class of limiting random directing measures which adapt the classical Parisi ultrametricity ansatz~\cite{Pan--book} to the setting of dilute models.  As before, the necessary calculations are simple but tedious, so we omit the details.  Some discussion along these lines is given in~\cite{Pan--dil} for the SK model, rather than for dilute models.

The key objects seem to be the following.  Suppose that $T$ is a discrete rooted tree with all leaves at a fixed finite distance from the root.  (The discussion that follows can certainly be extended to more general trees, but we omit that here.)  Let $\ast$ be the root and $\partial T$ the set of leaves.  Also, let $\S$ be the Borel $\s$-algebra of $[0,1)$.  We formulate the following on $[0,1)$, but it clearly makes sense on any probability space.

\begin{dfn}
A \textbf{branching filtration on $([0,1),\S,\rm{Leb})$ indexed by $T$} is a family of $\s$-subalgebras $(\S_t)_{t\in T}$ such that
\begin{itemize}
\item $t \leq t'$ $\Longrightarrow$ $\S_t \subseteq \S_{t'}$;
\item for any $t_0,\ldots,t_m$, the $\s$-algebra $\S_{t_0}$ is conditionally independent from $\S_{t_1} \vee \cdots \vee \S_{t_m}$ over $\S_s$ where $s = (t_0\wedge t_1) \vee (t_0\wedge t_2) \vee \cdots \vee (t_0\wedge t_m)$, the closest vertex of $T$ to $t_0$ which is a common ancestor of $t_0$ and some other $t_i$.
\end{itemize}
By analogy with ordinary filtrations, the branching filtration is \textbf{complete} if every $\S_t$ is complete for Lebesgue measure.

Given a branching filtration $\bs{\S} = (\S_t)_{t \in T}$, a \textbf{branchingale adapted to $\bs{\S}$} is a family of integrable $\bbR$-valued functions $(f_t)_{t \in T}$ on $[0,1)$ such that
\begin{itemize}
\item $f_t$ is $\S_t$-measurable;
\item $t \leq t'$ $\Longrightarrow$ $f_t = \sfE(f_{t'}\,|\,\S_t)$.
\end{itemize}
Observe that in this case every root-leaf path $\ast v_1v_2\cdots v_d$ gives a martingale $(f_\ast,f_{v_1},\ldots,f_{v_r})$ adapted to the filtration $(\S_\ast,\S_{v_1},\ldots,\S_{v_d})$; we call the branchingale \textbf{homogeneous} if every root-leaf path gives a martingale with the same distribution.
\end{dfn}

Sometimes we refer to the whole collection $(f_t,\S_t)_{t\in T}$ as a branchingale.

\begin{rmk}
Of course, stochastic processes indexed by trees have been studied before, but I have not been able to find a reference for precisely this notion.  Much of the literature concerns tree-indexed Markov processes, as in~\cite{BenPer94}, but I do not see why the r.v.s $f_v$ that we will use should have the Markov property. \fin
\end{rmk}

\begin{dfn}
A subset $Y\subseteq B([0,1),[-1,1])$ is \textbf{hierarchically distributed} if it equals $\{f_v:\ v \in \partial T\}$ for some \emph{homogeneous} branchingale $(f_t,\S_t)_{t \in T}$.  The minimal depth of $T$ in such a representation is the \textbf{depth} of the set $Y$.
\end{dfn}

Now a simple calculation shows that under the correspondence~(\ref{eq:WS}), the special weighting schemes used to formulate the $r$-step replica-symmetry breaking bound in~\cite[Section 5]{PanTal04} correspond to random measures $\bs{\g}$ which are a.s. supported on hierarchically distributed sets of depth $r$, and with the weights given by a Derrida-Ruelle probability cascade that follows the indexing tree.

To be more specific, in their work they consider r.v.s $X_t$ indexed by the leaves $t$ of a tree $T$ of depth $r$ and infinite branching, and specify their joint distribution by constructing a larger family of random variables
\[(\eta^{(0)},\eta^{(1)}_{t_1},\eta^{(2)}_{t_1t_2},\ldots,\eta^{(r-1)}_{t_1t_2\cdots t_{r-1}},\eta^{(r)}_{t_1t_2\cdots t_r})\]
indexed by all downwards paths from the root in $T$, where:
\begin{itemize}
 \item $\eta^{(r)}_{t_1\cdots t_r} = X_{t_r}$ for each leaf $t_r \in \partial T$,
\item for a shorter path $t_1t_2\cdots t_s$, $0 \leq s \leq r-1$, the r.v. $\eta^{(s)}_{t_1t_2\cdots t_s}$ takes values in the space
\[\underbrace{\Pr(\Pr(\cdots\Pr(}_{r-s}\bbR))),\]
\item and for each $t_1t_2\cdots t_s$ with $s \leq r-1$, the r.v.s $\eta^{(s+1)}_{t_1t_2\cdots t_st}$ indexed by all the children $t$ of $t_s$ are chosen independently from $\eta^{(s)}_{t_1t_2\cdots t_s}$, and similarly the random variables at all further children along distinct ancestral lines are conditionally independent.
\end{itemize}
Such a structure arises from a homogeneous branchingale $(f_t,\S_t)_{t \in T}$ for which $0 < f_t < 1$ a.s. as follows. Let $\eta^{(r-1)}_{t_1\cdots t_{r-1}}$ be the conditional distribution of $\Phi^{-1}\circ f_{t_r}$ on $\S_{t_{r-1}}$ for any child $t_r$ of $t_{r-1}$, where $\Phi(x) = \rm{e}^x/(\rm{e}^x + \rm{e}^{-x})$ as before, and the condition $0 < f_{t_r} < 1$ ensures that this composition is defined a.s..  Now let $\eta^{(r-2)}_{t_1\cdots t_{r-2}}$ be the conditional distribution of $\eta^{(r-1)}_{t_1\cdots t_{r-1}}$ on $\S_{t_{r-2}}$, and so on.  These are then related to the functions $f_t$ themselves in that $f_{t_s}$ is obtained from $\eta^{(s)}_{t_1\cdots t_s}$ by applying $\Phi$ and then taking barycentres $r-s$ times.  If one starts instead from the r.v.s $\eta^{(s)}_{t_1\cdots t_s}$ as above, another simple (but lengthy) iterated appeal to Lemma~\ref{lem:NOL} produces a homogeneous branchingale that gives rise to it.

Thus, the natural analog of the Parisi ultrametricity ansatz for the Viana-Bray model seems to be that in the infimum of Theorem~\ref{thm:var}, if one formulates the right-hand side in terms of directing random measures, it is enough to consider directing random measures that are a.s. supported on hierarchically distributed subsets of $B([0,1),\Pr\{-1,1\})$.

\bibliographystyle{abbrv}
\bibliography{bibfile}

\end{document}